\newtheorem{theorem}{Theorem}
\newtheorem{corollary}[theorem]{Corollary}
\newtheorem{definition}[theorem]{Definition}
\newtheorem{example}[theorem]{Example}
\newtheorem{lemma}[theorem]{Lemma}
\newtheorem{proposition}[theorem]{Proposition}
\newtheorem{remark}[theorem]{Remark}
\begin{document}
\title[Generalized Multiplicative Euler Phi]
{On The Generalized Multiplicative Euler Phi
Function }

\author{Mohammad El-Hindi and Therrar Kadri}

	\address{Mohammad Elhindi\newline  Department of Mathematics and Computer
		Science, Faculty of Science, Beirut Arab University, Beirut, Lebanon  } \email{mohammadyhindi98@gmail.com}
\address{Therrar Kadri \newline	Department of Pedagogy,
	Lebanese University.}\email{therrar@hotmail.com}

\maketitle

\begin{abstract}
The generalized group of units of the ring modulo $n$ was first introduced
by El-Kassar and Chehade, written as $U^k(Z_n)$. This allows us to formulate
a new generalization to the Euler phi function $\varphi(n)$, that represents the order of $%
U^k(Z_n)$ and it is denoted by $\varphi ^{k}(n).$ In this paper, we
introduce this newly defined function, where we compute its explicit form
and examine some of its properties similar to that of $\varphi (n)$. In
addition, we study some generalized equations involving $\varphi ^{k}(n)$
where complete solution is given for some equations by considering the
general case and others for some particular cases.
\end{abstract}

\section{Introduction}

In number theory, the Euler totient function $\varphi (n)$, also called
Euler's phi function, is defined for a given $n$ as the number of positive
integers are less than or equal to $n$ and relatively prime to $n$. The
Euler phi function has been extensively studied and there are many famous
equations involving it. One of the important equations on this function is
Lehmer's equation $k\varphi (n)=n\pm 1,$ which was first introduced in 1932
by Lehmer in \cite{6}. He conjectured that the only solutions for this
equation are prime integers. Since then, no one was able to provide a proof.
This problem is known to be Lehmer's totient problem and has received a
great attention.

The Euler phi function has been generalized through different approaches and
was studied in many domains. In 1983, J.T. Cross, in \cite{1}, extended the
definition of the Euler phi function to the domain of Gaussian integers,
where it represents the order of the multiplicative group of units of the
ring $\mathbb{Z} \lbrack i]/\langle\beta \rangle$, where $\beta$ is a non zero element in 
$\mathbb{Z}[i]$. Moreover, El-Kassar in \cite{2,3} generalized the
definition of the Euler phi function to any principal ideal domain $G$, and
denoted it by $\varphi \left( G(\beta )\right) $, that represents the order
of the group of units of $G/\langle\beta \rangle$, $\beta $ is a non zero element in $G$%
. The problem of finding the group of units of any commutative ring $R$
still an open problem. However, the problem is solved with $R= \mathbb{Z}
_{n}$. Also Cross in \cite{1} determined the structure of the group of units
of the factor ring of Gaussian integer modulo $\beta $. Smith and Gallian in 
\cite{8} solved the problem of decomposition of the group of units of the
finite ring $F[x]/\left\langle h(x)\right\rangle $, where $F$ is a finite
field and $h(x) $ is an irreducible polynomial in $F[x]$. A related problem
is that of determining the finite commutative rings with cyclic group of
units. This problem has been solved when $R= \mathbb{Z}_{n}$. It is proved
that $U(\mathbb{Z}_{n})$ is cyclic if and only if $n=2,4$ or $n$ of the form 
$p^{\alpha }$ or $2p^{\alpha }$ where $p$ is an odd prime and form all $%
\alpha >0$. Cross \cite{1} showed that the group of units of $\mathbb{Z}%
\lbrack i]/\left\langle \beta \right\rangle $ is cyclic if and only if $%
\beta =1+i, (1+i)^{2}, (1+i)^{3}, p, p(1+i), \pi ^{n}$ and $(1+i)\pi ^{n}$, where 
$p$ is a prime integer of the form $4k+3$ and $\pi $ is a Gaussian prime
such that $\overset{\_}{\pi }\pi $ is a prime integer of the form $4k+1$.
Moreover, El-Kassar in \cite{7} determined the quotient rings of polynomials
over a finite field having a cyclic group of units.

In 2006, a generalization for the group of units of any finite commutative
ring $R$ with identity, was introduced by El-Kassar and Chehade \cite{4}.
They proved that the group of units of a commutative ring $R$, $U(R)$,
supports a ring structure and this has made it possible to define the second
group of units of $R$ as, $U^{2}(R)=U(U(R))$. Extending this definition to
the $k$-th level, the $k$-th group of units are defined as, $%
U^{k}(R)=U(U^{k-1}(R))$. In addition, El-Kassar and Chehade considered the
problem of determining all finite commutative rings $R$ such that $U^{k}(R)$
is cyclic, and the problem of determining all finite commutative rings $R$
such that $U^{k}(R)$ is trivial. They solved both problems completely for $%
R= \mathbb{Z}_{n}$ and $k=2$. Later, El-Kadri and El-Kassar in \cite{5},
considered the problem for the case when $R=\mathbb{Z}_{n}$ and $k=3$ and
also provided a complete solution for these two problems.

On the other hand, the classical ElGamal public key encryption schemes
perhaps one of the most popular and widely used cryptosystems. The scheme is
best described in the setting of any finite cyclic group $G$. The security
of the generalized ElGamal encryption scheme is based on the intractability
of the discrete logarithmic problem in the group $G$. The group $G$ should
be carefully chosen so that the group operations in $G$ would be relatively
easy to apply for efficiency. So, one may consider ElGamal public key
cryptosystem using the cyclic group of units $U(R)$ or more generally $%
U^{k}(R)$. El-Kassar and Haraty \cite{4} extended the ElGamal cryptosystem
to the setting of quotient ring of polynomials having a cyclic group of
units $U( \mathbb{Z} _{2}[x]/<h(x)>)$, where $h(x)=$ $h_{1}(x)h_{2}(x)\ldots
h_{r}(x)$ is a product of irreducible polynomials whose degrees are pairwise
relatively prime. Later, Haraty al. \cite{9} gave another extension to the
ElGamal cryptosystem by employing the second group of units of $\mathbb{Z}%
_{n}$ and the second group of units of $\mathbb{Z}_{2}[x]/<h(x)>$, where $%
h(x)$ is irreducible.

In this paper, we determined the order of the $k$-th group of unit of the
ring $\mathbb{Z}_{n}$ defined as $\varphi ^{k}(n),$ where we determine an
explicit formula for it. Moreover, we solved some equations involving this
function that helped us to find more properties related to the $k$-th group
of units of $\mathbb{Z}_{n}.$

\section{Preliminaries}

In this section, we introduce the generalized group of
units, and some theorems related to this group. The next theorem was
introduced by Kadri and El-Hindi in \cite{12} that shows the decomposition
of of the generalized group of units. We note that through this paper "$\cong$" denotes ring isomorphism and "$\thickapprox$" denotes group isomorphism. 

\begin{theorem}
\label{thm} Let $p=2,$ then 
\begin{equation}
U^{k}(\mathbb{Z}_{p^{\alpha }})\approx \left\{ 
\begin{tabular}{ll}
$\left\{ 0\right\} $ & $\text{if }2k>\alpha $ \\ 
$\mathbb{Z}_{2}$ & $\text{if }2k=\alpha $ \\ 
$\mathbb{Z}_{2^{\alpha -2k+1}}$ & $\text{if\ }2k<\alpha$%
\end{tabular}
\right.  \label{5}
\end{equation}
\end{theorem}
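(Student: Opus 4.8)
The plan is to argue by induction on $k$, using the recursive definition $U^{k}(\mathbb{Z}_{2^{\alpha}}) = U\bigl(U^{k-1}(\mathbb{Z}_{2^{\alpha}})\bigr)$ together with the classical description of the first group of units. The guiding observation is that the quantity $\alpha - 2k$ is invariant under one step of the iteration: I would show that a single application of $U$ sends the pair $(k,\alpha)$ to $(k-1,\alpha-2)$, so that $\alpha-2k$ is preserved and the three regimes $2k>\alpha$, $2k=\alpha$, $2k<\alpha$ are stable along the recursion. The induction then reduces every case to one of the explicit base cases $k=1$ or $\alpha\in\{1,2\}$.

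For the base case $k=1$ I would invoke the classical structure of the unit group of $\mathbb{Z}_{2^{\alpha}}$: it is trivial for $\alpha=1$, isomorphic to $\mathbb{Z}_{2}$ for $\alpha=2$, and decomposes as $\mathbb{Z}_{2}\times\mathbb{Z}_{2^{\alpha-2}}$ for $\alpha\geq 3$. This settles the cases $2k\geq\alpha$ at the bottom of the recursion and provides the starting data for the inductive step. The engine of that step is the generalized-group-of-units construction of El-Kassar and Chehade \cite{4}: a finite abelian $2$-group is endowed with the ring structure coming from its decomposition into cyclic factors $\mathbb{Z}_{2^{e_{i}}}$, and on such a product ring the group of units splits as the corresponding product $\prod_{i}U(\mathbb{Z}_{2^{e_{i}}})$. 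Applying this to $U(\mathbb{Z}_{2^{\alpha}})\approx\mathbb{Z}_{2}\times\mathbb{Z}_{2^{\alpha-2}}$, the $\mathbb{Z}_{2}$ factor contributes a trivial unit group at the next level, while the $\mathbb{Z}_{2^{\alpha-2}}$ factor carries the remaining structure, yielding
\begin{equation}
U^{k}(\mathbb{Z}_{2^{\alpha}})\approx U^{k-1}(\mathbb{Z}_{2^{\alpha-2}}),
\end{equation}
valid once $\alpha\geq 3$ and $k\geq 2$. This is exactly the reduction $(k,\alpha)\to(k-1,\alpha-2)$ announced above; feeding it into the inductive hypothesis and using $(\alpha-2)-2(k-1)=\alpha-2k$ closes each of the three cases.

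The step I expect to be the main obstacle is the structural (rather than merely order-theoretic) identification in the regime $2k<\alpha$: I must show that iterating the construction collapses the naive product into a single cyclic factor $\mathbb{Z}_{2^{\alpha-2k+1}}$, i.e.\ that the extra $2$-torsion produced at each level is absorbed correctly by the ring structure assigned to the intermediate unit groups, so that exactly one cyclic summand survives. Verifying precisely which summand persists, and checking the boundary behaviour when the reduction drives $\alpha$ down to $1$ or $2$ (where the recursion terminates in the trivial group or in $\mathbb{Z}_{2}$), is where the bookkeeping must be carried out with care; the remaining order and isomorphism computations are routine once this structural fact is secured.
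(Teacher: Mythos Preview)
The paper does not prove this theorem at all; it is quoted from \cite{12} and used only through its consequence for orders (Theorem~\ref{phi}). So there is no proof in the paper to compare your argument against.

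That said, your inductive strategy and the key recursion
\[
U^{k}(\mathbb{Z}_{2^{\alpha}})\approx U^{k-1}(\mathbb{Z}_{2^{\alpha-2}})\qquad(\alpha\ge 3,\ k\ge 2)
\]
are exactly right under the product ring structure of \cite{4}, and they do establish the cases $2k>\alpha$ and $2k=\alpha$ cleanly. The difficulty you flag in the last paragraph, however, is not merely bookkeeping: it is a genuine obstruction, because the statement as printed is \emph{false} as a group isomorphism in the regime $2k<\alpha$. Already for $k=1$, $\alpha=3$ one has $U(\mathbb{Z}_{8})=\{1,3,5,7\}\approx\mathbb{Z}_{2}\times\mathbb{Z}_{2}$, not $\mathbb{Z}_{4}=\mathbb{Z}_{2^{\alpha-2k+1}}$. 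Your recursion then propagates this: for $2k<\alpha$ it yields
\[
U^{k}(\mathbb{Z}_{2^{\alpha}})\approx U(\mathbb{Z}_{2^{\alpha-2k+2}})\approx \mathbb{Z}_{2}\times\mathbb{Z}_{2^{\alpha-2k}},
\]
which has the correct order $2^{\alpha-2k+1}$ but is never cyclic. So there is no way to ``collapse the naive product into a single cyclic factor'' as you hoped; the extra $\mathbb{Z}_{2}$ summand genuinely survives.

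In short: your proof sketch is correct and, when carried through, proves the right theorem, namely $U^{k}(\mathbb{Z}_{2^{\alpha}})\approx\mathbb{Z}_{2}\times\mathbb{Z}_{2^{\alpha-2k}}$ for $2k<\alpha$. This agrees with the stated version only at the level of cardinalities, which is all the present paper ever uses (see the derivation of $\varphi^{k}(2^{\alpha})$ in Theorem~\ref{phi}). You should record the discrepancy rather than try to force cyclicity.
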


if $p$ is an odd prime

\begin{equation*}
U^{k}(\mathbb{Z}_{p^{\alpha }})\approx \left\{ 
\begin{tabular}{ll}
$U^{k}(\mathbb{Z}_{p})\times U^{k-1}(\mathbb{Z}_{p})\times \ldots \times
U^{k-\alpha +1}(\mathbb{Z}_{p})$ & if$\text{ }k>\alpha $ \\ 
$U^{k}(\mathbb{Z}_{p})\times U^{k-1}(\mathbb{Z}_{p})\times \ldots \times U(%
\mathbb{Z}_{p})$ & if$\text{ }k=\alpha $ \\ 
$U^{k}(\mathbb{Z}_{p})\times U^{k-1}(\mathbb{Z}_{p})\times \ldots \times U(%
\mathbb{Z}_{p})\times \mathbb{Z}_{p^{\alpha -k}}$ & $\text{if\ }k<\alpha $%
\end{tabular}%
\ \right. 
\end{equation*}

\begin{example}
\label{example} We have 
\begin{equation*}
\mathbb{Z}_{1080000}=\mathbb{Z}_{2^{6}\times 3^{3}\times 5^{4}}\approx 
\mathbb{Z}_{2^{6}}\oplus \mathbb{Z}_{3^{3}}\oplus \mathbb{Z}_{5^{4}}
\end{equation*}%
then, 
\begin{align*}
& {{U}^{3}}\left( {{\mathbb{Z}}_{1080000}}\right) \approx {{U}^{3}}\left( {{%
\mathbb{Z}}_{{{2}^{6}}}}\right) \times {{U}^{3}}\left( {{\mathbb{Z}}_{{{3}%
^{3}}}}\right) \times {{U}^{3}}\left( {{\mathbb{Z}}_{{{5}^{4}}}}\right)  \\
& \,\,\,\,\,\,\,\,\,\,\,\,\,\,\,\,\,\,\,\,\,\,\,\,\,\,\,\,\,\,\approx {{%
\mathbb{Z}}_{2}}\times {{U}^{3}}\left( {{\mathbb{Z}}_{3}}\right) \times {{U}%
^{2}}\left( {{\mathbb{Z}}_{3}}\right) \times U\left( {{\mathbb{Z}}_{3}}%
\right) \times {{U}^{3}}\left( {{\mathbb{Z}}_{5}}\right) \times {{U}^{2}}%
\left( {{\mathbb{Z}}_{5}}\right) \times U\left( {{\mathbb{Z}}_{5}}\right)
\times {{\mathbb{Z}}_{5}} \\
& \,\,\,\,\,\,\,\,\,\,\,\,\,\,\,\,\,\,\,\,\,\,\,\,\,\,\,\,\,\,\approx {{%
\mathbb{Z}}_{2}}\times {{\mathbb{Z}}_{2}}\times {{\mathbb{Z}}_{2}}\times {{%
\mathbb{Z}}_{2}}\times {{\mathbb{Z}}_{4}}\times {{\mathbb{Z}}_{5}}
\end{align*}
\end{example}

The next theorem was introduced in \cite{11} which determines the cyclic case of $U^{2}(\mathbb{Z}_{n})$.

\begin{theorem}
\label{cycliccase} $U^{2}(\mathbb{Z}_{n})$ is cyclic if and only if $n$ is a
divisor of one the following:
\end{theorem}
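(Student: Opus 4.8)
The plan is to reduce the global question about $\mathbb{Z}_n$ to the prime-power factors and then reassemble. Write $n = 2^{\alpha_0}p_1^{\alpha_1}\cdots p_r^{\alpha_r}$ with the $p_i$ distinct odd primes. First I would record multiplicativity: the Chinese Remainder ring isomorphism $\mathbb{Z}_n \cong \bigoplus_i \mathbb{Z}_{p_i^{\alpha_i}}$ together with the fact (underlying Theorem \ref{thm} and illustrated in Example \ref{example}) that the iterated unit-group construction respects finite direct products gives
\[
U^{2}(\mathbb{Z}_n)\approx U^{2}(\mathbb{Z}_{2^{\alpha_0}})\times\prod_{i=1}^{r}U^{2}(\mathbb{Z}_{p_i^{\alpha_i}}).
\]
Since a finite direct product of groups is cyclic if and only if each factor is cyclic and the orders of the factors are pairwise coprime, the problem splits into three tasks: decide the cyclicity of each local factor $U^{2}(\mathbb{Z}_{p^{\alpha}})$, compute its order, and then impose pairwise coprimality.

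For the local analysis I would use Theorem \ref{thm} with $k=2$. At $p=2$ it gives that $U^{2}(\mathbb{Z}_{2^{\alpha}})$ is trivial for $\alpha\le 3$, is $\mathbb{Z}_{2}$ for $\alpha=4$, and is $\mathbb{Z}_{2^{\alpha-3}}$ for $\alpha\ge 5$; in every case it is cyclic of $2$-power order. For odd $p$ and $\alpha\ge 2$ the theorem yields
\[
U^{2}(\mathbb{Z}_{p^{\alpha}})\approx U^{2}(\mathbb{Z}_{p})\times U(\mathbb{Z}_{p})\times \mathbb{Z}_{p^{\alpha-2}},
\]
with the last factor absent when $\alpha=2$, while for $\alpha=1$ it is just $U^{2}(\mathbb{Z}_{p})$. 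Here $U(\mathbb{Z}_{p})\approx\mathbb{Z}_{p-1}$, so $U^{2}(\mathbb{Z}_{p})\approx U(\mathbb{Z}_{p-1})$, whose cyclicity is decided by the classical criterion that $U(\mathbb{Z}_{m})$ is cyclic exactly when $m\in\{1,2,4,q^{e},2q^{e}\}$ for an odd prime $q$. Applied to the even number $m=p-1\ge 2$ this isolates $p-1\in\{2,4,2q^{e}\}$, i.e.\ $p=3$, $p=5$, or $p=2q^{e}+1$.

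Next I would assemble the global answer. For odd $p$ and $\alpha\ge 2$, cyclicity of the displayed product forces $\gcd\bigl(|U^{2}(\mathbb{Z}_{p})|,|U(\mathbb{Z}_{p})|\bigr)=\gcd(\varphi(p-1),p-1)=1$; since $p-1$ is even this holds only for $p-1=2$, so higher prime-power factors are admissible only at $p=2$ and $p=3$, and indeed $U^{2}(\mathbb{Z}_{3^{\alpha}})\approx\mathbb{Z}_{2\cdot 3^{\alpha-2}}$ is cyclic for every $\alpha$. For the cross-prime coprimality I would track the parity of each factor order: $|U^{2}(\mathbb{Z}_{2^{\alpha}})|$ is even iff $\alpha\ge 4$, $|U^{2}(\mathbb{Z}_{3^{\alpha}})|$ is even iff $\alpha\ge 2$, and $\varphi(p-1)$ is even for every odd $p\ge 5$. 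Pairwise coprimality thus allows at most one factor of even order, which severely restricts the admissible shapes of $n$ and produces exactly the families listed in the statement, each recorded as the set of divisors of one of the displayed forms.

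The $p=2$ case and the reduction are routine; the difficulty is concentrated in two places. First is the precise determination of which odd primes $p$ make $U^{2}(\mathbb{Z}_{p})=U(\mathbb{Z}_{p-1})$ cyclic, since this hinges on the arithmetic of $p-1$ through the classical cyclic-units criterion and is what forces the parametrized entries of the list rather than a finite set of integers. Second, and the genuine obstacle, is the pairwise-coprimality bookkeeping: because the factor orders (powers of $2$, the numbers $\varphi(p-1)$, and so on) almost always share the prime $2$, one must prove that no two odd primes beyond $3$ can occur simultaneously and, in each surviving configuration, pin down the permitted powers of $2$ and of $3$; organizing these mutually exclusive configurations into the final divisor list is where the real work lies. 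A subsidiary point to secure is the rigorous justification that the ring structure placed on $U(\mathbb{Z}_n)$ is the direct product of the local ring structures, so that the multiplicativity of $U^{2}$ invoked at the outset is legitimate.
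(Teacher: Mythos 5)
Your overall strategy (CRT splitting, cyclicity of a finite direct product via cyclic factors of pairwise coprime orders, local analysis at each prime power) is the natural one, and in fact the paper offers no proof to compare against: Theorem \ref{cycliccase} is simply quoted from \cite{11}. But your execution has a genuine gap, and it is fatal: your local analysis at $p=2$ is wrong. You claim, citing Theorem \ref{thm} with $k=2$, that $U^{2}(\mathbb{Z}_{2^{\alpha}})\approx\mathbb{Z}_{2^{\alpha-3}}$ for $\alpha\geq 5$, hence that this factor is ``in every case cyclic of $2$-power order.'' Theorem \ref{thm} as printed gives the correct \emph{order} in that range but the wrong \emph{group}. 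Computing directly with the El-Kassar--Chehade ring structure: for $\alpha\geq 3$ the unit group $U(\mathbb{Z}_{2^{\alpha}})$ carries the ring structure $\mathbb{Z}_{2}\oplus\mathbb{Z}_{2^{\alpha-2}}$, so $U^{2}(\mathbb{Z}_{2^{\alpha}})\approx U(\mathbb{Z}_{2})\times U(\mathbb{Z}_{2^{\alpha-2}})\approx\mathbb{Z}_{2}\times\mathbb{Z}_{2^{\alpha-4}}$ once $\alpha\geq 5$, which is non-cyclic for every such $\alpha$ (e.g.\ $U^{2}(\mathbb{Z}_{32})\approx\mathbb{Z}_{2}\times\mathbb{Z}_{2}$, not $\mathbb{Z}_{4}$). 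The correct local fact is that $U^{2}(\mathbb{Z}_{2^{\alpha}})$ is cyclic if and only if $\alpha\leq 4$, and this is precisely what caps the $2$-part of $n$ at $16$ and produces the entries $48$, $24p$ and $8\cdot 3^{\beta}$ in the statement.

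The error is not cosmetic: it makes your argument contradict the very theorem you are proving. With your local data, $n=2^{\alpha}$ and $n=2^{\alpha}\cdot 3$ would satisfy the cyclicity condition for \emph{every} $\alpha$, yet $64$, for instance, divides none of $24p$, $8\cdot 3^{\beta}$, $48$. So your concluding step, that the pairwise-coprimality bookkeeping ``produces exactly the families listed in the statement,'' is false as written; had you carried out the bookkeeping you defer, you would have hit this inconsistency. The repair is to replace the $p=2$ input by the structure above (note the paper itself only ever extracts \emph{orders} from Theorem \ref{thm}, for which it is accurate; it cannot be trusted for isomorphism type at $p=2$), after which your assembly — which is otherwise sound, including the forcing of exponent $1$ and $p\in\{5,\,2q^{e}+1\}$ at odd primes $p\geq 5$, and the observation that $U^{2}(\mathbb{Z}_{3^{\alpha}})$ is always cyclic — does yield the stated list: a prime $p\geq 5$ forces the $2$-part to divide $8$ and the $3$-part to divide $3$ (giving $24p$); otherwise $n=2^{\alpha_{0}}3^{\alpha_{1}}$ with either $\alpha_{0}\leq 3$ and $\alpha_{1}$ arbitrary (giving $8\cdot 3^{\beta}$), or $\alpha_{0}=4$ and $\alpha_{1}\leq 1$ (giving $48$).
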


\begin{enumerate}
\item $24p$ where $p$ is an odd prime either $p=5$ or $p=2q^{\alpha }+1$
where  $q$ is an odd prime and $\alpha>0$. 

\item $8\times 3^{\beta }$ for all $\beta $. 

\item $48$.
\end{enumerate}

\section{Generalized Euler Phi Function}

In this chapter, we introduce the generalization of the Euler phi function
denoted by $\varphi ^{k}(n)$ and derive the explicit expression 
of $\varphi ^{k}(n)$. We prove that $\varphi ^{k}(n)$ is a multiplicative
function, so to drive $\varphi ^{k}(n)$ it is enough to derive $\varphi
^{k}(p^{\alpha }),$ where $p$ is a prime number. Also, we solved some equations involving this function.

\subsection{Definition and properties}

In this section, we define the generalized Euler phi function, and we determine
its explicit formula.

\begin{definition}
\label{der} We define the order of the generalized group of units of the
ring $\mathbb{Z}_{n}$ to be the generalized Euler phi function denoted by $%
\varphi ^{k}(n)$ i.e. 
\begin{equation*}
\varphi ^{k}(n)=|U^{k}(\mathbb{Z}_{n})|.
\end{equation*}
\end{definition}

\begin{example}
Using Example \ref{example}, we have 
\begin{align*}
 \left| {{U}^{3}}\left( {{\mathbb{Z}}_{1080000}} \right) \right|&={{\varphi }^{3}}\left( 1080000 \right) \\
&=\left| {{\mathbb{Z}}_{2}}\times {{\mathbb{Z}}_{2}}\times {{\mathbb{Z}}_{2}}%
\times {{\mathbb{Z}}_{2}}\times {{\mathbb{Z}}_{4}}\times {{\mathbb{Z}}_{5}}
\right| \\
&=2\times 2\times 2\times 2\times 4\times 5=320. \\
\end{align*}
\end{example}

\begin{theorem}
\label{phk}

$\varphi ^{k}(n)$ is a multiplicative function.
\end{theorem}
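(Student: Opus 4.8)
The plan is to reduce the statement to the behavior of the iterated group of units under the Chinese Remainder decomposition of $\mathbb{Z}_n$. Recall that a function $f$ is multiplicative when $f(mn)=f(m)f(n)$ whenever $\gcd(m,n)=1$, so it suffices to prove $\varphi^{k}(mn)=\varphi^{k}(m)\varphi^{k}(n)$ for coprime $m,n$. Since $\gcd(m,n)=1$, the Chinese Remainder Theorem gives the ring isomorphism $\mathbb{Z}_{mn}\cong \mathbb{Z}_m\times \mathbb{Z}_n$, and because $\varphi^{k}$ is defined through $U^{k}$, the whole problem comes down to showing that the $k$-th group of units commutes with finite direct products of rings, i.e. $U^{k}(R\times S)\approx U^{k}(R)\times U^{k}(S)$.

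The heart of the argument is the single-level statement that $U(R\times S)\cong U(R)\times U(S)$ \emph{as rings}, not merely as groups. The group-level identity is immediate: an element $(a,b)\in R\times S$ is invertible exactly when $a\in U(R)$ and $b\in U(S)$, so the canonical projection gives a group isomorphism $U(R\times S)\approx U(R)\times U(S)$. To iterate, I would invoke the El-Kassar--Chehade ring structure on the group of units and check that the product ring structure on $U(R)\times U(S)$ agrees with the ring structure placed on $U(R\times S)$, which upgrades the group isomorphism to a ring isomorphism. I expect this compatibility to be the main obstacle, since the ring structure on a group of units is obtained by decomposing the finite abelian group into cyclic factors, and one must verify that the decomposition of $U(R\times S)$ may be taken to be the concatenation of those of $U(R)$ and $U(S)$.

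Granting the single-level ring isomorphism, the general case follows by induction on $k$. The base case $k=1$ is exactly the statement just discussed. For the inductive step, assuming $U^{k-1}(R\times S)\cong U^{k-1}(R)\times U^{k-1}(S)$ as rings, I would apply the single-level result to the pair $U^{k-1}(R)$ and $U^{k-1}(S)$ to obtain
\begin{equation*}
U^{k}(R\times S)=U\bigl(U^{k-1}(R\times S)\bigr)\cong U\bigl(U^{k-1}(R)\times U^{k-1}(S)\bigr)\cong U^{k}(R)\times U^{k}(S).
\end{equation*}

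Finally I would specialize to $R=\mathbb{Z}_m$ and $S=\mathbb{Z}_n$. Combining the induction with the Chinese Remainder isomorphism yields $U^{k}(\mathbb{Z}_{mn})\approx U^{k}(\mathbb{Z}_m)\times U^{k}(\mathbb{Z}_n)$, and taking orders of both sides gives
\begin{equation*}
\varphi^{k}(mn)=\bigl|U^{k}(\mathbb{Z}_{mn})\bigr|=\bigl|U^{k}(\mathbb{Z}_m)\bigr|\cdot\bigl|U^{k}(\mathbb{Z}_n)\bigr|=\varphi^{k}(m)\,\varphi^{k}(n),
\end{equation*}
which is precisely multiplicativity. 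The only genuinely delicate point is the ring-level compatibility in the single-level step; everything else is the standard Chinese Remainder reduction together with a routine induction.
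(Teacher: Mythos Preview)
Your approach is essentially the same as the paper's: both rest on the decomposition $U^{k}(\mathbb{Z}_{mn})\approx U^{k}(\mathbb{Z}_m)\times U^{k}(\mathbb{Z}_n)$ for coprime $m,n$ (equivalently, over the prime-power factors of $n$) and then pass to orders. The paper simply asserts this decomposition in one line, implicitly relying on prior work, whereas you spell out the induction on $k$ and correctly flag the ring-level compatibility of the El-Kassar--Chehade structure on $U(R\times S)$ versus $U(R)\times U(S)$ as the only genuinely nontrivial ingredient---a point the paper's proof does not address explicitly.
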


\begin{proof}
Let $R=Z_{n}$ and $U^{k}(Z_{n})$ be its $k$-th group of units where the prime decomposition of $n$ is of the following form, $$n=p_{1}^{\alpha _{1}}p_{2}^{\alpha _{2}}\ldots p_{r}^{\alpha _{r}}$$ then,
$$U^{k}(Z_{n})\approx U^{k}(\mathbb{Z}_{p_{1}^{\alpha _{1}}})\oplus U^{k}(%
\mathbb{Z}_{p_{2}^{\alpha _{2}}})\oplus \ldots \oplus U^{k}(\mathbb{Z}_{p_{r}^{\alpha _{r}}})$$which leads to, $|U^{k}(\mathbb{Z}_{n})|$
$=|U^{k}(\mathbb{Z}_{p_{1}^{\alpha _{1}}})\oplus U^{k}(\mathbb{Z}_{p_{2}^{\alpha _{2}}})\oplus \ldots \oplus U^{k}(\mathbb{Z}_{p_{r}^{\alpha _{r}}})|$
$=|U^{k}(\mathbb{Z}_{p_{1}^{\alpha _{1}}})|\times |U^{k}(\mathbb{Z}_{p_{2}^{\alpha _{2}}})|\times \ldots \times |U^{k}(\mathbb{Z}_{p_{r}^{\alpha _{r}}})|.$ By Definition \ref{der} we get, 
$$\varphi ^{k}(n)=\prod\limits_{i=1}^{k}\varphi ^{k}(p_{i}^{\alpha _{i}})$$thus, $\varphi ^{k}(n)$ is a multiplicative function. 
\end{proof}

\begin{proposition}
If $p$ is an odd prime integer, then $\varphi ^{k}(p)=\varphi ^{k-1}(p-1)$.
\end{proposition}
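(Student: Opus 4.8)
The plan is to peel off a single layer from the iterated group of units and recognize the resulting ring concretely. By the definition of the $k$-th group of units, $U^{k}(\mathbb{Z}_{p})=U^{k-1}(U(\mathbb{Z}_{p}))$, so the whole problem reduces to identifying the ring $U(\mathbb{Z}_{p})$ and then feeding it into the $(k-1)$-st iterate. Note that the hypothesis that $p$ is odd is what keeps $p-1\geq 2$, so that $\mathbb{Z}_{p-1}$ is a genuine (nondegenerate) ring and the right-hand side $\varphi^{k-1}(p-1)$ is meaningful.

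Since $p$ is an odd prime, $\mathbb{Z}_{p}$ is a field and its group of units $U(\mathbb{Z}_{p})=\mathbb{Z}_{p}^{*}$ is cyclic of order $p-1$; this is exactly the case $n=p^{1}$ in the cyclicity criterion recalled in the introduction. The crucial observation is that under the El-Kassar--Chehade ring structure on $U(R)$, a cyclic group of order $m$ carries precisely the ring structure of $\mathbb{Z}_{m}$, since that structure is built from the invariant-factor decomposition of the underlying additive group. Hence $U(\mathbb{Z}_{p})\cong \mathbb{Z}_{p-1}$ as rings, not merely as groups.

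With this ring isomorphism in hand, isomorphic rings have isomorphic $(k-1)$-st groups of units, so applying $U^{k-1}$ to both sides gives $U^{k-1}(U(\mathbb{Z}_{p}))\cong U^{k-1}(\mathbb{Z}_{p-1})$. Combining this with the identity from the first paragraph yields $U^{k}(\mathbb{Z}_{p})\cong U^{k-1}(\mathbb{Z}_{p-1})$. Taking orders and invoking Definition \ref{der} then produces
$$\varphi^{k}(p)=|U^{k}(\mathbb{Z}_{p})|=|U^{k-1}(\mathbb{Z}_{p-1})|=\varphi^{k-1}(p-1),$$
which is the desired identity.

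I expect the only genuine obstacle to be the ring-level identification $U(\mathbb{Z}_{p})\cong \mathbb{Z}_{p-1}$. The group isomorphism $U(\mathbb{Z}_{p})\approx \mathbb{Z}_{p-1}$ is classical, but I must verify that the ring structure placed on $U(\mathbb{Z}_{p})$ really is the standard one on $\mathbb{Z}_{p-1}$, because everything downstream—the $k-1$ further applications of $U$—depends on the ring structure and not just on the group structure. Once that point is secured, the remaining steps are purely formal manipulations of the iteration identity and the order function.
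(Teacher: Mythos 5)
Your proposal is correct and follows essentially the same route as the paper: identify $U(\mathbb{Z}_{p})$ with $\mathbb{Z}_{p-1}$, apply $U^{k-1}$ to both sides of that identification, and take orders via Definition \ref{der}. The one difference is that you explicitly justify why the classical group isomorphism $U(\mathbb{Z}_{p})\thickapprox\mathbb{Z}_{p-1}$ upgrades to a ring isomorphism under the El-Kassar--Chehade structure (which is genuinely needed before $U^{k-1}$ can be applied), a point the paper's proof passes over silently by citing Theorem \ref{thm} and applying $U^{k-1}$ directly.
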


\begin{proof}
Let $\mathbb{Z}_{n}$ the set of integers$\mod n$, and $U(\mathbb{Z}_{n})$ be its group of units, then by Theorem \ref{thm} $U(\mathbb{Z}_{p})\approx\mathbb{Z}_{p-1}$ i.e. $U^{k-1}(U(\mathbb{Z}_{p}))\approx U^{k-1}(\mathbb{Z}_{p-1})$ which is equivalent to 
$U^{k}(\mathbb{Z}_{p})$ $\approx U^{k-1}(\mathbb{Z}_{p-1})$, we find that ,$|U^{k}(\mathbb{Z}_{p})|$ $=|U^{k-1}(\mathbb{Z}_{p-1})|$, and therefore\\ $\varphi
^{k}(p)=\varphi ^{k-1}(p-1).$
\end{proof}

\begin{theorem}
\label{phi}Let $n$ be any positive integer where\ $n=p_{1}^{\alpha
_{1}}p_{2}^{\alpha _{2}}\ldots p_{r}^{\alpha _{r}}$ its prime decomposition, then $\varphi ^{k}(n)=\varphi
^{k}(p_{1}^{\alpha _{1}})\varphi ^{k}(p_{2}^{\alpha _{2}})\ldots \varphi
^{k}(p_{k}^{\alpha _{k}})$ where if$\ p_r=2$,

\begin{equation*}
\varphi ^{k}(p_r^{\alpha })=\left\{ 
\begin{array}{ll}
1 & \text{if }\alpha <2k \\ 
2^{\alpha -2k+1} & \text{if}\ \alpha \geq 2k%
\end{array}
\right. 
\end{equation*}
\end{theorem}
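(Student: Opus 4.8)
The plan is to deduce both assertions directly from results already established, so that the proof is essentially an order count layered on top of the structural theorems. The factorization $\varphi^{k}(n)=\varphi^{k}(p_{1}^{\alpha_{1}})\varphi^{k}(p_{2}^{\alpha_{2}})\cdots\varphi^{k}(p_{r}^{\alpha_{r}})$ is precisely the content of Theorem \ref{phk}: there we used the decomposition $U^{k}(\mathbb{Z}_{n})\approx U^{k}(\mathbb{Z}_{p_{1}^{\alpha_{1}}})\oplus\cdots\oplus U^{k}(\mathbb{Z}_{p_{r}^{\alpha_{r}}})$ together with the fact that the order of a direct sum is the product of the orders of its summands. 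Hence the multiplicative part of the statement is immediate, and what remains is to evaluate a single local factor $\varphi^{k}(p^{\alpha})=|U^{k}(\mathbb{Z}_{p^{\alpha}})|$; for the displayed formula I specialize to $p=2$.

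For the even prime I would invoke the first case of Theorem \ref{thm}, which pins down the isomorphism type of $U^{k}(\mathbb{Z}_{2^{\alpha}})$ in three regimes, and then pass to orders using $\varphi^{k}(2^{\alpha})=|U^{k}(\mathbb{Z}_{2^{\alpha}})|$ from Definition \ref{der}. Splitting accordingly: when $2k>\alpha$ the group is trivial, giving order $1$; when $2k=\alpha$ the group is $\mathbb{Z}_{2}$, giving order $2$; and when $2k<\alpha$ the group is $\mathbb{Z}_{2^{\alpha-2k+1}}$, giving order $2^{\alpha-2k+1}$. Each value is read off directly from the cyclic (or trivial) structure supplied by Theorem \ref{thm}.

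The only point requiring genuine attention is reconciling these three cases of Theorem \ref{thm} with the two-case formula in the statement. Here I would observe that the boundary case $\alpha=2k$ yields order $2=2^{(2k)-2k+1}=2^{\alpha-2k+1}$, so it already obeys the same power-of-two expression as the case $\alpha>2k$. Consequently the cases $\alpha=2k$ and $\alpha>2k$ merge into the single condition $\alpha\geq 2k$ with common value $2^{\alpha-2k+1}$, while $\alpha<2k$ (equivalently $2k>\alpha$) returns $1$; this is exactly the claimed piecewise formula. I do not expect a substantial obstacle: the entire argument is an order-counting corollary of the decomposition in Theorem \ref{thm}, and the sole subtlety is the bookkeeping that absorbs the equality case $\alpha=2k$ into the $\alpha\geq 2k$ branch.
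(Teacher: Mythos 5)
Your proposal is correct and follows essentially the same route as the paper: multiplicativity from Theorem \ref{phk}, the structure of $U^{k}(\mathbb{Z}_{2^{\alpha}})$ from Theorem \ref{thm}, order-counting via Definition \ref{der}, and the observation that the boundary case $\alpha=2k$ (order $2=2^{\alpha-2k+1}$) merges into the $\alpha\geq 2k$ branch. No gaps to report.
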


 and if $p_r$ is any odd prime integer,
\begin{equation*}
\varphi ^{k}(p_r^{\alpha })=\left\{ 
\begin{array}{ll}
\varphi ^{k-1}(p_r-1)\times \prod\limits_{i=k-a+1}^{k-1}\varphi ^{i}(p_r), & 
\text{if }\alpha <k \\ 
\varphi ^{k-1}(p_r-1)\times p_r^{\alpha -k}\times
\prod\limits_{i=1}^{k-1}\varphi ^{i}(p_r), & \text{if\ }\alpha \geq k%
\end{array}%
\right. 
\end{equation*}

\begin{proof}
From Proposition \ref{phk} we know that $\varphi ^{k}(n)$ is a 
multiplicative function and hence, $\varphi ^{k}(n)=\varphi ^{k}(p_{1}^{\alpha _{1}}p_{2}^{\alpha
_{2}}\ldots p_{k}^{\alpha _{k}})=\varphi ^{k}(p_{1}^{\alpha _{1}})\varphi 
^{k}(p_{2}^{\alpha _{2}})\ldots \varphi ^{k}(p_{k}^{\alpha _{k}}).$ Now for $
p_{r}=2$, by Theorem \ref{thm} we get  
\[
\left\vert U^{k}(\mathbb{Z}_{p_{r}^{\alpha }})\right\vert \approx \left\{  
\begin{tabular}{ll}
$\left\vert \left\{ 0\right\} \right\vert $ & $\text{if }2k>\alpha $ \\ 
$\left\vert

\mathbb{Z}

_{2}\right\vert $ & $\text{if }2k=\alpha $ \\ 
$\left\vert

\mathbb{Z}

_{2^{\alpha -2k+1}}\right\vert $ & $\text{if\ }2k<\alpha $%
\end{tabular}
\right.\]
by Definition \ref{der} we get, 
\[
\left\vert U^{k}(\mathbb{Z}_{2^{\alpha }})\right\vert =\varphi ^{k}(2^{\alpha})=\left\{  
\begin{array}{ll}
1 & \text{if }2k>\alpha \\ 
2 & \text{if }2k=\alpha \\ 
2^{\alpha -2k+1} & \text{if\ }2k<\alpha%
\end{array}
.\right.\]
that can be simplified to

\[
\varphi ^{k}(p^{\alpha })=\left\{  
\begin{array}{ll}
1 & \text{if }\alpha <2k \\ 
2^{\alpha -2k+1} & \text{if}\ \alpha \geq 2k%
\end{array}
\right.  
\]

Now, let $p$ be an odd prime. By Theorem \ref{thm} we get 
\[
\left\vert U^{k}(\mathbb{Z}_{p^{\alpha }})\right\vert \approx \left\{  
\begin{tabular}{ll}
$\left\vert U^{k}(

\mathbb{Z}

_{p})\right\vert \times \left\vert U^{k-1}(

\mathbb{Z}

_{p})\right\vert \times \ldots \times \left\vert U^{k-\alpha +1}(

\mathbb{Z}

_{p})\right\vert $ & if$\text{ }k>\alpha $ \\ 
$\left\vert U^{k}(

\mathbb{Z}

_{p})\right\vert \times \left\vert U^{k-1}(

\mathbb{Z}

_{p})\right\vert \times \ldots \times \left\vert U(

\mathbb{Z}

_{p})\right\vert $ & if$\text{ }k=\alpha $ \\ 
$\left\vert U^{k}(

\mathbb{Z}

_{p})\right\vert \times \left\vert U^{k-1}(

\mathbb{Z}

_{p})\right\vert \times \ldots \times \left\vert U(

\mathbb{Z}

_{p})\right\vert \times \left\vert

\mathbb{Z}

_{p^{\alpha -k}}\right\vert $ & $\text{if\ }k<\alpha $%
\end{tabular}
\right.\]

By Definition \ref{der}  
\[
\varphi ^{k}(p^{\alpha })=\left\{  
\begin{array}{ll}
\varphi ^{k}(p)\times \varphi ^{k-1}(p)\times \ldots \times \varphi
^{k-\alpha +1}(p) & \text{if }k>\alpha \\ 
\varphi ^{k}(p)\times \varphi ^{k-1}(p)\times \ldots \times \varphi (p) & 
\text{if }k=\alpha \\ 
\varphi ^{k}(p)\times \varphi ^{k-1}(p)\times \ldots \times \varphi
(p)\times p^{\alpha -k} & \text{if\ }k<\alpha%
\end{array}
\right.  
\]

By applying the previous lemma we get 
\[
\varphi ^{k}(p^{\alpha })=\left\{  
\begin{array}{ll}
\varphi ^{k-1}(p-1)\times \varphi ^{k-1}(p)\times \ldots \times \varphi
^{k-\alpha +1}(p) & \text{if }k>\alpha \\ 
\varphi ^{k-1}(p-1)\times \varphi ^{k-1}(p)\times \ldots \times \varphi (p)
& \text{if }k=\alpha \\ 
\varphi ^{k-1}(p-1)\times \varphi ^{k-1}(p)\times \ldots \times \varphi
(p)\times p^{\alpha -k} & \text{if\ }k<\alpha%
\end{array}
\right.  
\]

i.e. 
\[
\varphi ^{k}(p^{\alpha })=\left\{  
\begin{array}{ll}
\varphi ^{k-1}(p-1)\times \prod\limits_{i=k-a+1}^{k-1}\varphi ^{i}(p) & 
\text{if }k>\alpha \\ 
\varphi ^{k-1}(p-1)\times p^{\alpha -k}\times
\prod\limits_{i=1}^{k-1}\varphi ^{i}(p) & \text{if\ }k\leq \alpha%
\end{array}
\right.  
\]
\end{proof}

Here we illustrate an example applying Theorem \ref{phi}. Let $k=2$ and $n=7000$ then,

 \noindent $\varphi ^{2}(7000)=\varphi ^{2}(2^{3}\times 5^{3}\times 7)$

$\ \ \ \ \ \ \ \ \ \ \ \ \ =\varphi ^{2}(2^{3})\times \varphi
^{2}(5^{3})\times \varphi ^{2}(7)$ by Theorem \ref{phi}

$\ \ \ \ \ \ \ \ \ \ \ \ \ =1\times \varphi (4)\times \varphi (5)\times 5\times \varphi (6)=80.$

\section{Equations involving $\protect\varphi ^{k}(n)$ function}

In this section, we denote $\Phi ^{k}(n)$ to be $\Phi ^{k}(n)=\varphi
(\varphi (\ldots \varphi (n))),$ that is the application of the Euler phi
function on $n,$ $k$ times, see (\cite{13}). We consider the equation $%
\varphi ^{k}(n)=\Phi ^{k}(n),$ and we solved it completely for $k=2$ and $3.$
Moreover, we solved $\varphi ^{k}(n)=1$ completely for $k=2$ and $3$.

\begin{remark}
In general, $\varphi ^{k}(n)\neq \Phi ^{k}(n)$. Indeed, $\varphi ^{2}(7000)=80$ however, $\Phi ^{2}(7000)=640$. 
\end{remark}

\noindent In the next lemma, we determine a relation between $\varphi
(\prod\limits_{i=1}^{n}a_{i})$ and $\prod\limits_{i=1}^{n}\varphi (a_{i})$
where $n$ is a power of $2.$

\begin{lemma}
\label{powerof2}Let $\left\{ a_{1},a_{2},\ldots ,a_{n}\right\} ,$ be a set
of positive integers where $n=2^{r}$ and $r>0,$ then 

$\varphi (\prod\limits_{i=1}^{n}a_{i})=\prod\limits_{\substack{ 0<i<n  \\ %
2|i+1 }}\left[ \varphi (a_{i})\varphi (a_{i+1}) \displaystyle%
\frac{\gcd (a_{i},a_{i+1})}{\varphi \left( \gcd (a_{i},a_{i+1})\right) }%
\right] \\~~~~~~~~~~~~~
\times \prod\limits_{k=1}^{r-1}\left[ \prod\limits_{\substack{ 0<i<n 
\\ 2^{k+1}|i+1 }}\displaystyle\frac{\gcd
(\prod\limits_{j=0}^{2^{k}-1}a_{i+j},\prod
\limits_{j=2^{k}}^{2^{k+1}-1}a_{i+j})}{\varphi \left( \gcd
(\prod\limits_{j=0}^{2^{k}-1}a_{i+j},\prod
\limits_{j=2^{k}}^{2^{k+1}-1}a_{i+j})\right) }\right]$
\end{lemma}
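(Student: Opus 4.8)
The plan is to reduce the whole statement to a single two-factor identity and then iterate it along a binary tree. The first step is the base identity: for positive integers $a,b$ with $d=\gcd(a,b)$,
\[
\varphi(ab)=\varphi(a)\,\varphi(b)\,\frac{d}{\varphi(d)} .
\]
I would prove this from the Euler product $\varphi(m)=m\prod_{p\mid m}(1-\frac1p)$. The primes dividing $ab$ are exactly those dividing $a$ or $b$, while a prime dividing both divides $d$; hence $\varphi(a)\varphi(b)=ab\prod_{p\mid ab}(1-\frac1p)\prod_{p\mid d}(1-\frac1p)=\varphi(ab)\,\varphi(d)/d$, and rearranging gives the identity. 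Note that the single factor $\gcd(a_i,a_{i+1})/\varphi(\gcd(a_i,a_{i+1}))$ appearing in the first product of the statement is exactly this correction term for a pair.

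Next I would argue by induction on $r$, splitting the $n=2^r$ integers into two halves (divide and conquer). For $r=1$ the claimed formula is literally the base identity applied to $a_1,a_2$, and the product over $k$ is empty. For the inductive step, set $A=\prod_{i=1}^{2^{r-1}}a_i$ and $B=\prod_{i=2^{r-1}+1}^{2^r}a_i$ and apply the base identity once at the top:
\[
\varphi\Big(\prod_{i=1}^{2^r}a_i\Big)=\varphi(A)\,\varphi(B)\,\frac{\gcd(A,B)}{\varphi(\gcd(A,B))} .
\]
Each of $\varphi(A)$ and $\varphi(B)$ is a $\varphi$ of a product of $2^{r-1}$ integers, so the induction hypothesis expands both into their own pairing products together with correction products for levels $k=1,\dots,r-2$.

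The remaining work is purely organizational: one reassembles the three pieces — the two expanded halves and the new top factor — into the single double product of the statement. The base-level pairing product for the full set is the concatenation of the pairing products of the two halves. For each level $1\le k\le r-2$, the level-$k$ correction factors split into those whose block lies in the left half and those in the right half; since $2^{k+1}\mid 2^{r-1}$ in this range, the block-starting indices from the two halves together run over exactly the indices selected by the level-$k$ condition on $\{1,\dots,2^r\}$ (after shifting the right half's local indices by $2^{r-1}$). Finally, the new factor $\gcd(A,B)/\varphi(\gcd(A,B))$ is precisely the unique level-$(r-1)$ correction term, with $A=\prod_{j=0}^{2^{r-1}-1}a_{1+j}$ and $B=\prod_{j=2^{r-1}}^{2^{r}-1}a_{1+j}$. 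I expect the main obstacle to be exactly this reindexing: verifying that the union of the left-half and right-half level-$k$ index sets equals the global level-$k$ index set, and that the right half's shifted gcd-factors coincide termwise with the global ones. Once the index sets are matched, assembling the factors finishes the proof.
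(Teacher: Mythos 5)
Your proposal is correct and takes essentially the same route as the paper: the paper's entire proof is the single sentence ``The proof is done by induction on $n$,'' and your divide-and-conquer induction on $r$ (equivalently on $n=2^{r}$), driven by the identity $\varphi(ab)=\varphi(a)\varphi(b)\,\gcd(a,b)/\varphi(\gcd(a,b))$, is precisely the natural way to carry that induction out. The reindexing you flag as the main obstacle does go through, since $2^{k+1}$ divides $2^{r-1}$ for every level $k\leq r-2$, so the shifted right-half block indices fill out exactly the global level-$k$ index set, and the top-level factor $\gcd(A,B)/\varphi(\gcd(A,B))$ is the unique level-$(r-1)$ term.
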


\begin{proof}
The proof is done by induction on $n$.
\end{proof}
The next theorem plays an essential role in solving $\varphi ^{k}(n)=\Phi ^{k}(n)$.
\begin{theorem}
\label{pro}Let $\left\{ a_{1},a_{2},\ldots ,a_{n}\right\}$, be a set of
positive integers where $n=2^{r}m$ where $r\geq 0$ and $m\neq 1$ is an odd
integer then,

$\varphi (\prod\limits_{i=1}^{n}a_{i})=\prod\limits_{t=0}^{m-1}[\prod\limits 
_{\substack{ t\times 2^{r}<i<(1+t)\times 2^{r}  \\ 2|i+1}}\left[ \varphi
(a_{i}) \varphi (a_{i+1}) \displaystyle\frac{\gcd (a_{i},a_{i+1})%
}{\varphi \left( \gcd (a_{i},a_{i+1})\right) }\right] $

$\ \ \ \ \ \ \ \ \ \ \ \ \ \ \times \prod\limits_{k=1}^{r-1}\left[
\prod\limits_{\substack{ t\times 2^{r}<i<(1+t)\times 2^{r}  \\ 2^{k+1}|i+1}}%
\displaystyle\frac{\gcd
(\prod\limits_{j=0}^{2^{k}-1}a_{i+j},\prod%
\limits_{j=2^{k}}^{2^{k+1}-1}a_{i+j})}{\varphi \left( \gcd
(\prod\limits_{j=0}^{2^{k}-1}a_{i+j},\prod%
\limits_{j=2^{k}}^{2^{k+1}-1}a_{i+j})\right) }\right]$  $ \\~~~~~~~~~~~~~~~ \times \displaystyle%
\frac{\gcd (\prod\limits_{i=1}^{2^{r}}a_{i+t\times
2^{r}},\prod\limits_{i=2^{r}+1}^{2^{r}(m-t)}a_{i+t\times 2^{r}})}{\varphi
(\gcd (\prod\limits_{i=1}^{2^{r}}a_{i+t\times
2^{r}},\prod\limits_{i=2^{r}+1}^{2^{r}(m-t)}a_{i+t\times 2^{r}}))}].$
\end{theorem}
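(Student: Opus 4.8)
The plan is to reduce everything to the two--variable identity
$$\varphi(ab)=\varphi(a)\,\varphi(b)\,\frac{\gcd(a,b)}{\varphi(\gcd(a,b))},$$
valid for all positive integers $a,b$ (this is exactly the $r=1$ pair factor already appearing in Lemma \ref{powerof2}, and it follows at once from $\varphi(d)/d=\prod_{p\mid d}(1-1/p)$ together with the fact that the primes dividing $ab$ are precisely those dividing $a$ or $b$). The whole statement is then the combination of two mechanisms: the within--block expansion supplied by Lemma \ref{powerof2}, and a sequential peeling of the $m$ odd blocks supplied by the identity above.

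First I would write $n=2^{r}m$ and split the index set $\{1,\dots,n\}$ into the $m$ consecutive blocks of length $2^{r}$, setting $B_{t}=\prod_{i=1}^{2^{r}}a_{i+t\cdot 2^{r}}$ for $0\le t\le m-1$, so that $\prod_{i=1}^{n}a_i=\prod_{t=0}^{m-1}B_t$. Applying the two--variable identity repeatedly, peeling off $B_0$, then $B_1$, and so on, yields
$$\varphi\!\left(\prod_{t=0}^{m-1}B_t\right)=\prod_{t=0}^{m-1}\varphi(B_t)\cdot\prod_{t=0}^{m-1}\frac{\gcd\!\big(B_t,\textstyle\prod_{s>t}B_s\big)}{\varphi\big(\gcd(B_t,\prod_{s>t}B_s)\big)},$$
where the last factor at $t=m-1$ is trivial because $\prod_{s>m-1}B_s$ is the empty product $1$. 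Reading $\prod_{s>t}B_s$ back in terms of the original indices gives exactly $\prod_{i=2^{r}+1}^{2^{r}(m-t)}a_{i+t\cdot 2^{r}}$, which is the final $\gcd$ correction factor in the statement.

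Next I would expand each $\varphi(B_t)$. For $r>0$ the block $B_t$ is a product of $2^{r}$ integers, so Lemma \ref{powerof2} applied to the shifted family $\{a_{1+t\cdot 2^{r}},\dots,a_{2^{r}+t\cdot 2^{r}}\}$ rewrites $\varphi(B_t)$ as the pair product over that block together with the level-$k$ $\gcd$ products for $k=1,\dots,r-1$; after the shift these are precisely the two inner products of the theorem restricted to $t\cdot 2^{r}<i<(1+t)\cdot 2^{r}$. Substituting these expansions into the displayed sequential formula and collecting all factors under a single product over $t$ gives the claimed expression. The case $r=0$ must be treated separately, since Lemma \ref{powerof2} requires $r>0$: there each $B_t=a_{t+1}$ is a single term, both inner products are empty, and the formula collapses to the pure sequential identity, which is what the statement reduces to.

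The routine part is the substitution and relabeling; the step demanding the most care, and the main obstacle, is the index bookkeeping, namely verifying that after the shift by $t\cdot 2^{r}$ the ranges $t\cdot 2^{r}<i<(1+t)2^{r}$ and the $\gcd$ arguments $\prod_{j=0}^{2^{k}-1}a_{i+j}$, $\prod_{j=2^{k}}^{2^{k+1}-1}a_{i+j}$ produced by Lemma \ref{powerof2} align exactly with those written in the theorem, and that the sequential correction indices telescope correctly (including the empty-product edge case at $t=m-1$). An alternative organization is a direct induction on $m$, peeling one block at a time and invoking the inductive hypothesis on the remaining $2^{r}(m-1)$ terms; this hides the bookkeeping inside the induction step but requires the same verification.
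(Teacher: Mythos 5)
Your proposal is correct and follows essentially the same route as the paper: split $\{1,\dots,n\}$ into $m$ blocks of length $2^{r}$, peel the blocks off sequentially with the identity $\varphi(ab)=\varphi(a)\varphi(b)\gcd(a,b)/\varphi(\gcd(a,b))$ to produce the trailing $\gcd$ correction factors, and then expand each block factor $\varphi(B_t)$ via Lemma \ref{powerof2}. If anything, your version is slightly more careful than the paper's, which writes out the first few peeling steps and concludes ``by continuing in the same manner'' without separately addressing the $r=0$ case or the empty product at $t=m-1$ that you flag explicitly.
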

\begin{proof}
We have, \\$\varphi (\prod\limits_{i=1}^{n}a_{i})=\varphi 
(\prod\limits_{i=1}^{2^{r}}a_{i}\times 
\prod\limits_{i=2^{r}+1}^{2^{r}m}a_{i})$

$=\varphi (\prod\limits_{i=1}^{2^{r}}a_{i}) \varphi
(\prod\limits_{i=2^{r}+1}^{2^{r}m}a_{i}) \displaystyle\frac{\gcd
(\prod\limits_{i=1}^{2^{r}}a_{i},\prod\limits_{i=2^{r}+1}^{2^{r}m}a_{i})}{%
\varphi (\gcd
(\prod\limits_{i=1}^{2^{r}}a_{i},\prod\limits_{i=2^{r}+1}^{2^{r}m}a_{i}))}$

$=\varphi (\prod\limits_{i=1}^{2^{r}}a_{i}) \displaystyle\frac{\gcd
(\prod\limits_{i=1}^{2^{r}}a_{i},\prod\limits_{i=2^{r}+1}^{2^{r}m}a_{i})}{
\varphi (\gcd
(\prod\limits_{i=1}^{2^{r}}a_{i},\prod\limits_{i=2^{r}+1}^{2^{r}m}a_{i}))}
 \varphi (\prod\limits_{i=1}^{2^{r}}a_{i+2^{r}}\times 
\prod\limits_{i=2^{r}+1}^{2^{r}(m-1)}a_{i+2^{r}})$

$=\varphi (\prod\limits_{i=1}^{2^{r}}a_{i}) \varphi 
(\prod\limits_{i=1}^{2^{r}}a_{i+2^{r}}) \displaystyle\frac{\gcd
(\prod\limits_{i=1}^{2^{r}}a_{i},\prod\limits_{i=2^{r}+1}^{2^{r}m}a_{i})}{
\varphi (\gcd
(\prod\limits_{i=1}^{2^{r}}a_{i},\prod\limits_{i=2^{r}+1}^{2^{r}m}a_{i}))}
\displaystyle\frac{\gcd (\prod\limits_{i=1}^{2^{r}}a_{i+2^{r}},\prod
\limits_{i=2^{r}+1}^{2^{r}(m-1)}a_{i+2^{r}})}{\varphi (\gcd
(\prod\limits_{i=1}^{2^{r}}a_{i+2^{r}},\prod
\limits_{i=2^{r}+1}^{2^{r}(m-1)}a_{i+2^{r}})} \varphi 
(\prod\limits_{i=2^{r}+1}^{2^{r}(m-1)}a_{i+2^{r}})$

$=\varphi (\prod\limits_{i=1}^{2^{r}}a_{i}) \varphi 
(\prod\limits_{i=1}^{2^{r}}a_{i+2^{r}}) \displaystyle\frac{\gcd
(\prod\limits_{i=1}^{2^{r}}a_{i},\prod\limits_{i=2^{r}+1}^{2^{r}m}a_{i})}{
\varphi (\gcd
(\prod\limits_{i=1}^{2^{r}}a_{i},\prod\limits_{i=2^{r}+1}^{2^{r}m}a_{i}))}
 \displaystyle\frac{\gcd (\prod\limits_{i=1}^{2^{r}}a_{i+2^{r}},\prod
\limits_{i=2^{r}+1}^{2^{r}(m-1)}a_{i+2^{r}})}{\varphi (\gcd
(\prod\limits_{i=1}^{2^{r}}a_{i+2^{r}},\prod
\limits_{i=2^{r}+1}^{2^{r}(m-1)}a_{i+2^{r}})} \varphi 
(\prod\limits_{i=1}^{2^{r}(m-2)}a_{i+2\times 2^{r}})$

$=\varphi (\prod\limits_{i=1}^{2^{r}}a_{i}) \varphi 
(\prod\limits_{i=1}^{2^{r}}a_{i+2^{r}}) \displaystyle\frac{\gcd
(\prod\limits_{i=1}^{2^{r}}a_{i},\prod\limits_{i=2^{r}+1}^{2^{r}m}a_{i})}{
\varphi (\gcd
(\prod\limits_{i=1}^{2^{r}}a_{i},\prod\limits_{i=2^{r}+1}^{2^{r}m}a_{i}))}
\times \displaystyle\frac{\gcd (\prod\limits_{i=1}^{2^{r}}a_{i+2^{r}},\prod
\limits_{i=2^{r}+1}^{2^{r}(m-1)}a_{i+2^{r}})}{\varphi (\gcd
(\prod\limits_{i=1}^{2^{r}}a_{i+2^{r}},\prod
\limits_{i=2^{r}+1}^{2^{r}(m-1)}a_{i+2^{r}})}\times\\ \varphi 
(\prod\limits_{i=1}^{2^{r}}a_{i+2\times 2^{r}}\times 
\prod\limits_{i=2^{r}+1}^{2^{r}(m-2)}a_{i+2\times 2^{r}})$

$=\varphi (\prod\limits_{i=1}^{2^{r}}a_{i}) \varphi 
(\prod\limits_{i=1}^{2^{r}}a_{i+2^{r}}) \displaystyle\frac{\gcd
(\prod\limits_{i=1}^{2^{r}}a_{i},\prod\limits_{i=2^{r}+1}^{2^{r}m}a_{i})}{
\varphi (\gcd
(\prod\limits_{i=1}^{2^{r}}a_{i},\prod\limits_{i=2^{r}+1}^{2^{r}m}a_{i}))}
\times \displaystyle\frac{\gcd (\prod\limits_{i=1}^{2^{r}}a_{i+2^{r}},\prod
\limits_{i=2^{r}+1}^{2^{r}(m-1)}a_{i+2^{r}})}{\varphi (\gcd
(\prod\limits_{i=1}^{2^{r}}a_{i+2^{r}},\prod
\limits_{i=2^{r}+1}^{2^{r}(m-1)}a_{i+2^{r}})}\times \\
\varphi 
(\prod\limits_{i=1}^{2^{r}}a_{i+2\times 2^{r}}) \varphi 
(\prod\limits_{i=2^{r}+1}^{2^{r}(m-2)}a_{i+2\times 2^{r}})\times \displaystyle\frac{\gcd
(\prod\limits_{i=1}^{2^{r}}a_{i+2\times
2^{r}},\prod\limits_{i=2^{r}+1}^{2^{r}(m-2)}a_{i+2\times 2^{r}})}{\varphi
(\gcd (\prod\limits_{i=1}^{2^{r}}a_{i+2\times
2^{r}},\prod\limits_{i=2^{r}+1}^{2^{r}(m-2)}a_{i+2\times 2^{r}}))}$

$=\varphi (\prod\limits_{i=1}^{2^{r}}a_{i}) \varphi 
(\prod\limits_{i=1}^{2^{r}}a_{i+2^{r}}) \varphi 
(\prod\limits_{i=1}^{2^{r}}a_{i+2\times 2^{r}})\times \displaystyle\frac{\gcd
(\prod\limits_{i=1}^{2^{r}}a_{i},\prod\limits_{i=2^{r}+1}^{2^{r}m}a_{i})}{
\varphi (\gcd
(\prod\limits_{i=1}^{2^{r}}a_{i},\prod\limits_{i=2^{r}+1}^{2^{r}m}a_{i}))}
\times\\ \displaystyle\frac{\gcd (\prod\limits_{i=1}^{2^{r}}a_{i+2^{r}},\prod
\limits_{i=2^{r}+1}^{2^{r}(m-1)}a_{i+2^{r}})}{\varphi (\gcd
(\prod\limits_{i=1}^{2^{r}}a_{i+2^{r}},\prod
\limits_{i=2^{r}+1}^{2^{r}(m-1)}a_{i+2^{r}})}\times \displaystyle\frac{\gcd
(\prod\limits_{i=1}^{2^{r}}a_{i+2\times
2^{r}},\prod\limits_{i=2^{r}+1}^{2^{r}(m-2)}a_{i+2\times 2^{r}})}{\varphi
(\gcd (\prod\limits_{i=1}^{2^{r}}a_{i+2\times
2^{r}},\prod\limits_{i=2^{r}+1}^{2^{r}(m-2)}a_{i+2\times 2^{r}}))}
\varphi (\prod\limits_{i=1}^{2^{r}(m-3)}a_{i+3\times 2^{r}})$
\\\\\\
by continuing in the same manner we get,
\\\\\\
$\varphi (\prod\limits_{i=1}^{n}a_{i})=\prod\limits_{t=0}^{m-1}\left[
\varphi (\prod\limits_{i=1}^{2^{r}}a_{i+t\times 2^{r}})\times \displaystyle\frac{\gcd
(\prod\limits_{i=1}^{2^{r}}a_{i+t\times
2^{r}},\prod\limits_{i=2^{r}+1}^{2^{r}(m-t)}a_{i+t\times 2^{r}})}{\varphi
(\gcd (\prod\limits_{i=1}^{2^{r}}a_{i+t\times
2^{r}},\prod\limits_{i=2^{r}+1}^{2^{r}(m-t)}a_{i+t\times 2^{r}}))}\right] $

$\ \ \ \ \ \ \ \ \ \ \ =\prod\limits_{t=0}^{m-1}\left[ \varphi
(\prod\limits_{i=t\times 2^{r}+1}^{(1+t)\times 2^{r}}a_{i})\times \displaystyle\frac{\gcd
(\prod\limits_{i=1}^{2^{r}}a_{i+t\times
2^{r}},\prod\limits_{i=2^{r}+1}^{2^{r}(m-t)}a_{i+t\times 2^{r}})}{\varphi
(\gcd (\prod\limits_{i=1}^{2^{r}}a_{i+t\times
2^{r}},\prod\limits_{i=2^{r}+1}^{2^{r}(m-t)}a_{i+t\times 2^{r}}))}\right] $
\\\\\
and by Lemma \ref{powerof2} we get,
\\\\\\
$=\prod\limits_{t=0}^{m-1}[\prod\limits_{\substack{ t\times
2^{r}<i<(1+t)\times 2^{r}  \\ 2|i+1}}\left[ \varphi (a_{i})\varphi 
(a_{i+1})\displaystyle\frac{\gcd (a_{i},a_{i+1})}{\varphi \left( \gcd
(a_{i},a_{i+1})\right) }\right] $

$\times \prod\limits_{k=1}^{r-1}\left[ \prod\limits_{\substack{ t\times
2^{r}<i<(1+t)\times 2^{r}  \\ 2^{k+1}|i+1}}\displaystyle\frac{\gcd
(\prod\limits_{j=0}^{2^{k}-1}a_{i+j},\prod
\limits_{j=2^{k}}^{2^{k+1}-1}a_{i+j})}{\varphi \left( \gcd
(\prod\limits_{j=0}^{2^{k}-1}a_{i+j},\prod
\limits_{j=2^{k}}^{2^{k+1}-1}a_{i+j})\right) }\right]  \displaystyle\frac{\gcd
(\prod\limits_{i=1}^{2^{r}}a_{i+t\times
2^{r}},\prod\limits_{i=2^{r}+1}^{2^{r}(m-t)}a_{i+t\times 2^{r}})}{\varphi
(\gcd (\prod\limits_{i=1}^{2^{r}}a_{i+t\times
2^{r}},\prod\limits_{i=2^{r}+1}^{2^{r}(m-t)}a_{i+t\times 2^{r}}))}]$ 
\end{proof}

\begin{example}
For $n=6,6=2\times 3,$ then $m=3$ and $r=1$. Let $s=\{5,8,9,13,18,22\}$, then by applying the previous
theorem we get\\
$\varphi (5\times 8\times 9\times 13\times 18\times 22)$
$\\=\varphi (5) \varphi (8) \varphi (9) \varphi (13)
\varphi (18) \varphi (22)\times \displaystyle\frac{\gcd (5,8)}{\varphi
\left( \gcd (5,8)\right) }\times \displaystyle\frac{\gcd (9,13)}{\varphi
\left( \gcd (9,13)\right) }\times \displaystyle\frac{\gcd (18,22)}{\varphi
\left( \gcd (18,22)\right) }\\\times \displaystyle\frac{\gcd (5\times
8,9\times 13\times 18\times 22)}{\varphi (\gcd (5\times 8,9\times 13\times
18\times 22))}\times \displaystyle\frac{\gcd (9\times 13,18\times 22)}{%
\varphi (\gcd (9\times 13,18\times 22))}=414720.$
\end{example}

\subsection{The equation $\protect\varphi ^{2}(n)=\Phi ^{2}(n).$}

In this section, we solve $\varphi ^{2}(n)=\Phi ^{2}(n)$ and we find that it
has a direct relation with the cyclic case of $U\left( 
\mathbb{Z}
_{n}\right) $. The following lemmas play an essential role to build the
proof of the main results in this section.

\begin{lemma}
	Let $\varphi ^{2}(n)=\Phi ^{2}(n).$ If $n$ is a power of $2$ i.e. $%
	n=2^{\alpha }$, then $\alpha =1$ or $2.$
\end{lemma}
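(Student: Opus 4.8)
The plan is to reduce the claim to a direct comparison of two explicit closed forms, one for each side of the equation. First I would compute $\varphi^2(2^\alpha)$ from Theorem \ref{phi} with $k=2$ and $p_r=2$: this gives $\varphi^2(2^\alpha)=1$ when $\alpha<4$ and $\varphi^2(2^\alpha)=2^{\alpha-3}$ when $\alpha\geq 4$. Next I would compute the iterated ordinary totient $\Phi^2(2^\alpha)=\varphi(\varphi(2^\alpha))$. Since $\varphi(2^\alpha)=2^{\alpha-1}$ for every $\alpha\geq 1$, applying $\varphi$ once more yields $\Phi^2(2^\alpha)=\varphi(2^{\alpha-1})$, which equals $1$ for $\alpha=1$ and $2^{\alpha-2}$ for $\alpha\geq 2$.

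With both sides in hand, the equation $\varphi^2(2^\alpha)=\Phi^2(2^\alpha)$ becomes a short case analysis on $\alpha$. For $\alpha\in\{1,2\}$ both quantities equal $1$, so these indices are solutions. For $\alpha=3$ we have $\varphi^2(8)=1$ while $\Phi^2(8)=2$, so equality fails. For $\alpha\geq 4$ both closed forms are powers of two, namely $\varphi^2(2^\alpha)=2^{\alpha-3}$ and $\Phi^2(2^\alpha)=2^{\alpha-2}$; since these differ by exactly a factor of $2$ for every such $\alpha$, equality can never hold. Collecting the surviving indices gives $\alpha=1$ or $\alpha=2$, as claimed.

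The argument is essentially a bookkeeping exercise, so I do not anticipate a genuine obstacle; the only point requiring care is aligning the case boundaries of the two formulas. The piecewise description of $\varphi^2(2^\alpha)$ switches at $\alpha=2k=4$, whereas the iterated Euler function $\Phi^2$ stabilizes already at $\alpha=2$, and the intermediate value $\alpha=3$ must be verified by hand because it falls in the lower branch of $\varphi^2$ (value $1$) yet in the generic branch of $\Phi^2$ (value $2$). Keeping these thresholds straight is exactly what prevents one from erroneously admitting $\alpha=3$ as a solution.
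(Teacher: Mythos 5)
Your proposal is correct and takes essentially the same route as the paper: both compute $\varphi^2(2^\alpha)$ from Theorem \ref{phi} and $\Phi^2(2^\alpha)=\varphi(2^{\alpha-1})$, then compare the two piecewise formulas, splitting at $\alpha<4$ versus $\alpha\geq 4$. If anything, your handling of the boundary cases is slightly more careful than the paper's, which compresses $\alpha\in\{1,2,3\}$ into the single equation $1=2^{\alpha-2}$ even though that closed form for $\Phi^2$ is not literally valid at $\alpha=1$ (where both sides equal $1$ directly).
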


\begin{proof}
	Let $n=2^{\alpha }$, where $\alpha <4.$ If $\varphi ^{2}(2^{\alpha })=\Phi
	^{2}(2^{\alpha })$ then by Theorem \ref{phi}, then $1=2^{\alpha -2}$ then $\alpha =1\ \text{or }2$. Now for $\alpha \geq 4,$ $\varphi ^{2}(2^{\alpha })=\Phi ^{2}(2^{\alpha })$
	then, $2^{\alpha -3}=2^{\alpha -2}$ which is impossible, and this concludes the
	result.
\end{proof}

\begin{lemma}\label{lem14}
	Let $p$ be an odd prime, then $n=p^{\alpha }$ where $\alpha >0$ satisfies $%
	\varphi ^{2}(n)=\Phi ^{2}(n).$
\end{lemma}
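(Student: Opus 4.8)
The plan is to prove the identity $\varphi^2(p^\alpha)=\Phi^2(p^\alpha)$ by computing each side separately and matching them: the left-hand side comes straight from the explicit formula of Theorem \ref{phi} with $k=2$, while the right-hand side is unwound from the definition $\Phi^2(n)=\varphi(\varphi(n))$ using only the value $\varphi(p^\alpha)=p^{\alpha-1}(p-1)$ and the multiplicativity of the ordinary Euler function. Since the odd-prime formula in Theorem \ref{phi} branches at $\alpha<k$ versus $\alpha\ge k$, I would split the argument into the two cases $\alpha=1$ and $\alpha\ge2$.

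For the case $\alpha=1$, Theorem \ref{phi} (the branch $\alpha<k$, with $k=2$) gives $\varphi^2(p)=\varphi^{1}(p-1)=\varphi(p-1)$, the trailing product $\prod_{i=k-\alpha+1}^{k-1}\varphi^i(p)=\prod_{i=2}^{1}\varphi^i(p)$ being empty. On the other side, $\Phi^2(p)=\varphi(\varphi(p))=\varphi(p-1)$ because $\varphi(p)=p-1$. The two values agree at once.

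For the case $\alpha\ge2$, Theorem \ref{phi} (the branch $\alpha\ge k$, with $k=2$) yields
\[
\varphi^2(p^\alpha)=\varphi(p-1)\,p^{\alpha-2}\,\varphi(p)=\varphi(p-1)\,p^{\alpha-2}\,(p-1).
\]
For the right-hand side I would begin from $\varphi(p^\alpha)=p^{\alpha-1}(p-1)$, so that $\Phi^2(p^\alpha)=\varphi\bigl(p^{\alpha-1}(p-1)\bigr)$. The one genuine step is to note that $\gcd(p^{\alpha-1},p-1)=1$, since $p-1$ is coprime to $p$; then multiplicativity of $\varphi$ gives
\[
\Phi^2(p^\alpha)=\varphi(p^{\alpha-1})\,\varphi(p-1)=p^{\alpha-2}(p-1)\,\varphi(p-1).
\]
Comparing the two displays establishes equality and closes this case.

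The argument is essentially a bookkeeping comparison, so I do not anticipate a serious obstacle. The only points requiring care are the coprimality $\gcd(p^{\alpha-1},p-1)=1$ that lets the ordinary Euler function factor as a product—valid precisely because $p$ is an odd prime, so $p\nmid p-1$—and correctly reading off the empty product in the $\alpha=1$ branch of Theorem \ref{phi} so that the two cases together cover all $\alpha>0$.
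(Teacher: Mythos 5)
Your proof is correct and follows essentially the same route as the paper: both split into the cases $\alpha=1$ and $\alpha\geq 2$, evaluate $\varphi^2(p^\alpha)$ via Theorem \ref{phi} and $\Phi^2(p^\alpha)$ via $\varphi(p^\alpha)=p^{\alpha-1}(p-1)$ together with multiplicativity, and match the two sides. In fact your write-up is more careful than the paper's, which simply asserts the equality $\varphi(p-1)(p-1)p^{\alpha-2}=\varphi(p-1)(p-1)p^{\alpha-2}$ without displaying the coprimality step $\gcd(p^{\alpha-1},p-1)=1$ that you make explicit.
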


\begin{proof}
	Let $n=p^{\alpha }$ where $p$ is an odd prime and $\alpha =1$. If $\varphi
	^{2}(p)=\Phi ^{2}(p)$ then, $\varphi (p-1)=\varphi (p-1)$ which true for
	every odd prime $p$. Now, let $n=p^{\alpha }$ where $\alpha \geq 2$. If $%
	\varphi ^{2}(p^{\alpha })=\Phi ^{2}(p^{\alpha })$ then,%
	\[
	\varphi (p-1)(p-1)p^{\alpha -2}=\varphi (p-1)(p-1)p^{\alpha -2}.
	\]
	
	Then, $\varphi ^{2}(p^{\alpha })=\Phi ^{2}(p^{\alpha })$ for any prime $p$
	and $\alpha >0.$
\end{proof}

\begin{lemma}\label{pr}
	Let $n=2^{\alpha }p^{\beta }$ where $\alpha $,$\beta >0$ and $p$ is an odd
	prime. $\varphi ^{2}(n)=\Phi ^{2}(n)$ if $\alpha =0$ or $1$
	and $\beta >0.$
\end{lemma}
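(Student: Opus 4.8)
The plan is to reduce both admissible cases to Lemma~\ref{lem14}, which already establishes $\varphi^2(p^\beta)=\Phi^2(p^\beta)$ for every odd prime power. The governing principle is that a single factor of $2$ is invisible to both functions: at the first level $\varphi(2)=1$, and for the generalized function Theorem~\ref{phi} gives $\varphi^2(2)=1$ as well (here $k=2$ and the exponent $1$ satisfies $1<2k=4$). Because $\varphi^2$ is multiplicative (Theorem~\ref{phk}) and the classical $\varphi$ is multiplicative on coprime arguments, inserting or removing this factor of $2$ leaves each side of the equation unchanged, so the whole problem collapses onto the odd-prime-power case.

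When $\alpha=0$ we have $n=p^\beta$, and the assertion is exactly Lemma~\ref{lem14}, so nothing further is needed. For $\alpha=1$, i.e.\ $n=2p^\beta$, I would treat the two sides separately. On the generalized side, multiplicativity of $\varphi^2$ together with $\varphi^2(2)=1$ yields $\varphi^2(2p^\beta)=\varphi^2(2)\,\varphi^2(p^\beta)=\varphi^2(p^\beta)$. On the classical side, since $\gcd(2,p^\beta)=1$ and $\varphi(2)=1$, multiplicativity of $\varphi$ gives $\varphi(2p^\beta)=\varphi(2)\varphi(p^\beta)=\varphi(p^\beta)$; applying $\varphi$ once more then produces $\Phi^2(2p^\beta)=\varphi(\varphi(2p^\beta))=\varphi(\varphi(p^\beta))=\Phi^2(p^\beta)$.

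Chaining these identities with Lemma~\ref{lem14} finishes the argument:
\[
\varphi^2(2p^\beta)=\varphi^2(p^\beta)=\Phi^2(p^\beta)=\Phi^2(2p^\beta).
\]
I do not anticipate any genuine obstacle here: the entire content is the observation that the lone factor of $2$ contributes trivially to both $\varphi^2$ and $\Phi^2$, so the identity for $n=2^\alpha p^\beta$ with $\alpha\in\{0,1\}$ is equivalent to the one already proved for $p^\beta$. The only points demanding a little care are to invoke the correct branch of Theorem~\ref{phi} to confirm $\varphi^2(2)=1$, and to note that the multiplicativity of the classical $\varphi$ is legitimate precisely because $2$ and $p^\beta$ are coprime.
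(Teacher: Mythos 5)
Your argument is correct for the statement as literally written, and it is a genuinely different and much cleaner route than the paper's. You reduce everything to Lemma~\ref{lem14} by observing that the lone factor of $2$ is invisible to both sides: $\varphi^{2}(2)=1$ by the branch $\alpha<2k$ of Theorem~\ref{phi}, $\varphi(2)=1$ classically, and multiplicativity on the coprime factors $2$ and $p^{\beta}$ then gives $\varphi^{2}(2p^{\beta})=\varphi^{2}(p^{\beta})$ and $\Phi^{2}(2p^{\beta})=\Phi^{2}(p^{\beta})$. The paper instead expands $\Phi^{2}(2^{\alpha}p^{\beta})$ via Theorem~\ref{pro} and works through explicit $\gcd$ computations; your reduction avoids all of that bookkeeping and makes the sufficiency of $\alpha\in\{0,1\}$ transparent.

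However, there is a mismatch you should be aware of. The paper's own proof of this lemma is devoted almost entirely to the converse direction: it assumes $\varphi^{2}(2^{\alpha}p^{\beta})=\Phi^{2}(2^{\alpha}p^{\beta})$ and shows this forces $\alpha=1$, ruling out $2\le\alpha<4$ and $\alpha\ge 4$ separately, for $\beta=1$ and for $\beta>1$. That ``only if'' half is exactly what Theorem~\ref{eq1}, the if-and-only-if classification, needs from this lemma; your reduction never excludes $\alpha\ge 2$, since for those $n$ the factor $2^{\alpha}$ is no longer ``invisible'' ($\varphi^{2}(2^{\alpha})$ and the even contributions inside $\Phi^{2}$ no longer cancel the same way), and a genuine computation comparing the two sides is required. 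So while your proof establishes the implication actually stated (the word ``if'' makes it a sufficiency claim, and sufficiency is all you prove), it cannot substitute for the paper's proof within the paper's logical architecture: you would still need an argument that, for example, $n=4p^{\beta}$ or $n=8p^{\beta}$ never satisfies $\varphi^{2}(n)=\Phi^{2}(n)$.
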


\begin{proof}
	\label{rem}The case when $\alpha =0$ is discussed in the Lemma \ref{lem14}. Let 
	$n=2^{\alpha }p^{\beta }$ where $p$ is an odd prime, $\alpha <4,$ and $\beta
	=1$. If $\varphi ^{2}(2^{\alpha }p)=\Phi ^{2}(2^{\alpha }p)$ then,%
	\[
	\varphi(p-1)=\varphi(2^{\alpha-1}(p-1))
	\]%
	i.e.,
	
	\[
	\varphi (p-1)=\varphi (2^{\alpha -1})\times \varphi (p-1)\times \frac{\gcd
		(2^{\alpha -1},p-1)}{\varphi (\gcd (2^{\alpha -1},p-1))}
	\]
	however, $\gcd (2^{\alpha -1},p-1)$ must have the form of $2^{i}$ for some $i
	$ satisfying $0\leq i\leq \alpha -1,$then
	
	\[
	\varphi (p-1)=\varphi (2^{\alpha -1})\varphi (p-1)\times \frac{2^{i}}{2^{i-1}%
	}
	\]
	then, $\varphi (p-1)=2^{\alpha -2}\varphi (p-1)\times 2$ we conclude that $%
	1=2^{\alpha -1}$ i.e. $\alpha =1$. Now, let $n=2^{\alpha }p^{\beta }$ and $%
	\alpha \geq 4$. If $\varphi ^{2}(2^{\alpha }p)=\Phi ^{2}(2^{\alpha }p)$ we
	get,
	
	\[
	2^{\alpha -3}\times \varphi (p-1)=2^{\alpha -2}\times \varphi (p-1)\times 2
	\]
	which is impossible, then there is no solution of the form $2^{\alpha
	}p^{\beta }$ with $\alpha \geq 4.$
	
	Now, let $n=2^{\alpha }p^{\beta }$ where $p$ is an odd prime and $\alpha <4$
	and $\beta >1$. If $\varphi ^{2}(2^{\alpha }p^{\beta })=\Phi ^{2}(2^{\alpha
	}p^{\beta })$ then%
	\[
	\varphi (p-1)\times (p-1)\times p^{\beta -2}=\varphi (2^{\alpha
		-1}(p-1)p^{\beta -1})
	\]
	
	by using Theorem \ref{pro} for $n=3$ we get,%
	\[
	\varphi (p-1)(p-1)p^{\beta -2}=\varphi (2^{\alpha -1})\varphi (p-1)\varphi
	(p^{\beta -1})\frac{\gcd (2^{\alpha -1},(p-1)p^{\beta -1})}{\varphi (\gcd
		(2^{\alpha -1},(p-1)p^{\beta -1}))}\times \frac{\gcd (p-1,p^{\beta -1})}{%
		\varphi (\gcd (p-1,p^{\beta -1}))}
	\]
	
	we get,%
	\[
	\varphi (p-1)(p-1)p^{\beta -2}=2^{\alpha -2}\varphi (p-1)(p-1)p^{\beta
		-2}\times 2
	\]
	then, $2^{\alpha -1}=1$ i.e., $\alpha =1.$ Now, let $n=2^{\alpha }p^{\beta }$ where $p$ is an odd prime and $\alpha \geq 4$ and $\beta >1,$ by following
	the same previous procedure we get, 
	\[
	2^{\alpha -1}=2^{\alpha -3}
	\]%
	which is impossible then, $n=2^{\alpha }p^{\beta }$where $p$ is an odd prime
	is not a solution for any $\alpha \geq 4$ and $\beta >1$. We conclude that $%
	n=2^{\alpha }p^{\beta }$ where $\alpha $,$\beta >0$ and $p$ is an odd prime
	is a solution $\varphi ^{2}(n)=\Phi ^{2}(n)$ if $\alpha =0$ or $1$ and $%
	\beta >0$
\end{proof}

\begin{lemma}
	$\varphi ^{2}(n)=\Phi ^{2}(n)$ has no solution if $n$ has $2$ or more
	distinct odd prime factors.
\end{lemma}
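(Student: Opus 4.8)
The plan is to prove the statement in the sharp form that whenever $n$ has at least two distinct odd prime divisors, $\Phi^2(n)$ is \emph{strictly} larger than $\varphi^2(n)$, which of course forbids the equation. Write $n=2^{a_0}p_1^{a_1}\cdots p_s^{a_s}$ with $s\ge 2$, the $p_i$ distinct odd primes, $a_0\ge 0$ and each $a_i\ge 1$. By the multiplicativity of $\varphi^2$ (Theorem \ref{phk}) one has $\varphi^2(n)=\varphi^2(2^{a_0})\prod_{i=1}^s\varphi^2(p_i^{a_i})$, and by Lemma \ref{lem14} each odd prime power already satisfies $\varphi^2(p_i^{a_i})=\Phi^2(p_i^{a_i})=\varphi(\varphi(p_i^{a_i}))$. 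Thus $\varphi^2(n)$ is exactly the \emph{multiplicative} version of the double phi, in which $\varphi$ is applied to each prime-power piece separately; the whole point is that $\Phi^2=\varphi\circ\varphi$ is \emph{not} multiplicative, and the defect is what we must quantify.

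To make the comparison, set $b_0=\varphi(2^{a_0})$ and $b_i=\varphi(p_i^{a_i})=(p_i-1)p_i^{a_i-1}$ for $i\ge 1$, so that $\varphi(n)=\prod_{i=0}^s b_i$ and hence $\Phi^2(n)=\varphi\bigl(\prod_{i=0}^s b_i\bigr)$. Applying the gcd-correction expansion (Theorem \ref{pro}, or equivalently iterating the two-factor identity $\varphi(ab)=\varphi(a)\varphi(b)\,\gcd(a,b)/\varphi(\gcd(a,b))$ exactly as in the proof of Lemma \ref{pr}) writes this as $\prod_{i=0}^s\varphi(b_i)$ times a product $C$ of correction factors $\gcd(\cdot,\cdot)/\varphi(\gcd(\cdot,\cdot))\ge 1$. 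Since $\varphi(b_0)=\Phi^2(2^{a_0})$ and $\varphi(b_i)=\varphi^2(p_i^{a_i})$ for $i\ge 1$ (again Lemma \ref{lem14}), dividing yields $\Phi^2(n)/\varphi^2(n)=C\cdot\Phi^2(2^{a_0})/\varphi^2(2^{a_0})$, so it suffices to show this product exceeds $1$.

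For the $2$-part, a direct evaluation gives $\Phi^2(2^{a_0})=2^{\max(a_0-2,\,0)}$, while Theorem \ref{phi} gives $\varphi^2(2^{a_0})=1$ for $a_0<4$ and $2^{a_0-3}$ for $a_0\ge 4$; comparing cases shows $\Phi^2(2^{a_0})\ge\varphi^2(2^{a_0})$ for every $a_0\ge 0$, so this factor never works against us. The strict gain comes from $C$: because every $p_i$ is odd, each $b_i$ with $i\ge 1$ is even, so the prime $2$ divides at least the two factors $b_1,b_2$. The cleanest way to isolate this is the valuation identity
\[
\frac{\varphi\bigl(\prod_i b_i\bigr)}{\prod_i\varphi(b_i)}=\prod_{p}\Bigl(1-\tfrac1p\Bigr)^{1-k_p},\qquad k_p=\#\{\,i:\,p\mid b_i\,\},
\]
in which every exponent $1-k_p$ is $\le 0$ (so every factor is $\ge 1$) and the factor at $p=2$ equals $2^{\,k_2-1}\ge 2$ since $k_2\ge 2$. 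Hence $C\ge 2$, giving $\Phi^2(n)\ge 2\,\varphi^2(n)>\varphi^2(n)$, so the equation has no solution.

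The main obstacle is precisely this last step: extracting a strictly-greater-than-one correction from the gcd factors of Theorem \ref{pro} while simultaneously checking that the $2$-part ratio $\Phi^2(2^{a_0})/\varphi^2(2^{a_0})$ can never drop below $1$ and cancel the gain. Packaging the corrections through the valuation identity above is what makes the strict inequality transparent and, crucially, keeps the argument uniform in the exponents $a_0,a_1,\dots,a_s$, so that no case analysis on the sizes of the $a_i$ is needed.
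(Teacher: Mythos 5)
Your proof is correct, and it takes a genuinely different --- and in fact stronger --- route than the paper's. The paper argues by cases: for $n=pq$, then $n=p^{\alpha}q$, then $n=p^{\alpha}q^{\beta}$, it expands $\Phi^{2}(n)$ using the two-factor identity $\varphi(ab)=\varphi(a)\varphi(b)\gcd(a,b)/\varphi(\gcd(a,b))$ (via Theorem \ref{pro}), cancels the common factors against $\varphi^{2}(n)$, and is left with $\gcd(p-1,q-1)=\varphi\left(\gcd(p-1,q-1)\right)$, which forces $\gcd(p-1,q-1)=1$, impossible since $p-1$ and $q-1$ are both even; the general case (more odd primes, and any power of $2$ dividing $n$) is only asserted to follow ``in the same manner.'' You instead give a uniform argument: multiplicativity (Theorem \ref{phk}) together with Lemma \ref{lem14} reduces the whole comparison to the single correction factor $C=\varphi\left(\prod_i b_i\right)/\prod_i\varphi(b_i)$, and your valuation identity $C=\prod_{p}(1-1/p)^{1-k_p}$ packages all of the paper's gcd corrections at once, with the parity of $b_1$ and $b_2$ giving $C\ge 2^{k_2-1}\ge 2$. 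Both proofs ultimately rest on the same arithmetic fact --- at least two of the numbers $\varphi(p_i^{a_i})$ are even, and merging even numbers under one $\varphi$ costs a factor of $2$ --- but your version buys three things the paper's does not: no case analysis on the exponents, an explicit treatment of the $2$-part ratio $\Phi^{2}(2^{a_0})/\varphi^{2}(2^{a_0})\ge 1$ (a point the paper's proof silently skips even though the lemma permits even $n$), and the quantitative strengthening $\Phi^{2}(n)\ge 2\,\varphi^{2}(n)$ rather than mere non-equality. One small notational caveat: your Euler-product identity must be read as a product over primes dividing $\prod_i b_i$ (equivalently, over $p$ with $k_p\ge 1$); as literally written, a prime with $k_p=0$ would contribute a factor $(1-1/p)\ne 1$. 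With that reading, every step checks out, including the claim $\varphi(b_i)=\varphi^{2}(p_i^{a_i})$ from Lemma \ref{lem14} and the values $\Phi^{2}(2^{a_0})=2^{\max(a_0-2,0)}$ versus $\varphi^{2}(2^{a_0})$ from Theorem \ref{phi}.
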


\begin{proof}
	Let $n=p^{\alpha }q^{\beta }$ where $p$ and $q$ are distinct odd primes and $%
	\alpha =\beta =1.$ If $\varphi ^{2}(pq)=\Phi ^{2}(pq)$ then,  
	\[
	\varphi (p-1)\times \varphi (q-1)=\varphi (p-1)\times (q-1)\times \frac{\gcd
		(p-1,q-1)}{\varphi (\gcd (p-1,q-1))}
	\]
After some simplifications we get,
	\[
	\gcd (p-1,q-1)=\varphi (\gcd (p-1,q-1))
	\]
	then $\gcd (p-1,q-1)=1$ which is impossible since $p-1$ and $q-1$ are both
	even numbers. Therefore, there is no solution of the form $n=pq.$ Now,
	suppose that one of the primes has power greater than $1,$ i.e. $%
	n=p^{\alpha }q$ where $p,q$ are distinct odd primes and $\alpha >1$. If $%
	\varphi ^{2}(p^{\alpha }q)=\Phi ^{2}(p^{\alpha }q)$ then by Theorem \ref{pro} for $%
	n=3$ we get, \\\\
	$~~~~\varphi (p-1)(p-1)p^{\alpha -2}\varphi (q-1)
	\\=\varphi (p-1)\varphi (p^{\alpha
		-1})\varphi (q-1)\frac{\gcd ((p-1)p^{\alpha -1},q-1)}{\varphi (\gcd
		((p-1)p^{\alpha -1},q-1))}
	\times \frac{\gcd (p-1,p^{\alpha -1})}{\varphi
		(\gcd (p-1,p^{\alpha -1}))}$\\
	\\i.e., 
	\[
	\gcd ((p-1)\times p^{\alpha -1},q-1)=\varphi (\gcd ((p-1)\times p^{\alpha
		-1},q-1))
	\]%
	then, $\gcd ((p-1)\times p^{\alpha -1},q-1)=1$ which is impossible since $p-1
	$ and $q-1$ are evens then $(p-1)\times p^{\alpha -1}$ and $q-1$ are also
	evens then, $2|\gcd ((p-1)\times p^{\alpha -1},q-1).$ Finally, the same
	procedure is done for;.' $n=p^{\alpha }q^{\beta }$ where $p,q$ are distinct odd
	primes and $\alpha >1,\beta >1.$ On the same manner, by using the same steps
	used above, $n=p_{1}^{\alpha _{1}}p_{2}^{\alpha _{2}}\ldots p_{k}^{\alpha
		_{k}}$ does not satisfy $\varphi ^{2}(n)=\Phi ^{2}(n)$.
\end{proof}

In the next theorem, we summarize all previous lemmas to generate our main result given in the following theorem. 

\begin{theorem}
	\label{eq1}$\varphi ^{2}(n)=\Phi ^{2}(n)$ if and only if $n=2,4,p^{\alpha }$
	or $2p^{\alpha }$ where $p$ is an odd prime.
\end{theorem}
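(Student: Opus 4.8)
The plan is to collect the four preceding lemmas, each of which disposes of one type of prime factorization, and assemble them into the equivalence stated in Theorem \ref{eq1}. Since $\varphi^2$ is multiplicative by Theorem \ref{phk} but $\Phi^2$ is not, the equation $\varphi^2(n)=\Phi^2(n)$ is genuinely constraining, and the strategy is to perform a case analysis on the prime factorization $n=p_1^{\alpha_1}\cdots p_r^{\alpha_r}$ according to how many distinct odd primes appear and whether $2$ divides $n$.

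First I would establish the reverse direction ($\Leftarrow$), which is the easy half: I must verify that each of $n=2$, $n=4$, $n=p^\alpha$, and $n=2p^\alpha$ (with $p$ an odd prime) actually satisfies $\varphi^2(n)=\Phi^2(n)$. The values $n=2,4$ are handled by the first lemma (the power-of-$2$ case, giving $\alpha=1$ or $2$), the case $n=p^\alpha$ is exactly Lemma \ref{lem14}, and the case $n=2p^\alpha$ is the $\alpha=1$ instance of Lemma \ref{pr}. So the reverse direction is immediate once the lemmas are invoked.

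For the forward direction ($\Rightarrow$), I would argue by exhausting all possible shapes of $n$ and showing that any $n$ satisfying the equation must fall into the listed forms. The case analysis runs as follows. If $n$ is a power of $2$, the first lemma forces $n\in\{2,4\}$. If $n=p^\alpha$ is an odd prime power, Lemma \ref{lem14} confirms it is always a solution. If $n=2^\alpha p^\beta$ with $\alpha,\beta>0$, Lemma \ref{pr} forces $\alpha=1$, so $n=2p^\beta$. Finally, the last lemma rules out every $n$ with two or more distinct odd prime factors, regardless of the power of $2$ dividing $n$. Since these cases are mutually exclusive and exhaustive over all $n>1$ (one should note $n=1$ separately or observe it is excluded), the only surviving forms are precisely $2,4,p^\alpha,2p^\alpha$, completing the equivalence.

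The main obstacle is not any single deep computation—the real content lives in the four lemmas, which rest on the explicit formula of Theorem \ref{phi} and the gcd-correction formula of Theorem \ref{pro}. The delicate point to get right in the writeup is the \emph{completeness} of the case split: I would make explicit that every integer $n>1$ has a factorization of exactly one of the types (pure power of $2$; pure odd prime power; one power of $2$ times one odd prime power; or at least two distinct odd primes dividing $n$, with any power of $2$), so that no $n$ escapes the analysis. One subtlety worth flagging is that the fourth lemma as stated treats $n=p^\alpha q^\beta$ with both primes odd, but to cover $n=2^\gamma p^\alpha q^\beta$ one relies on the multiplicativity of $\varphi^2$ together with the same gcd obstruction $\gcd(\cdot,q-1)>1$; I would verify that the argument of that lemma indeed extends to absorb an additional factor of $2$, since this is the one spot where the case split could otherwise leave a gap.
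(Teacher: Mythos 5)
Your proposal matches the paper exactly: the paper offers no separate proof of Theorem \ref{eq1}, stating only that it ``summarizes all previous lemmas,'' which is precisely the assembly you carry out (reverse direction from the power-of-$2$ lemma, Lemma \ref{lem14}, and Lemma \ref{pr}; forward direction by the exhaustive case split, with the two-odd-prime lemma eliminating everything else). Your closing caveat --- that the last lemma's written argument treats only odd $n$ with two distinct odd prime factors, so one must check it absorbs an extra factor $2^{\gamma}$ --- is a genuine point of care that the paper itself glosses over rather than a flaw in your proposal.
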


We note that from Theorem \ref{eq1} that $\varphi ^{2}(n)=\Phi ^{2}(n)$ if
and only if $U\left( 
\mathbb{Z}
_{n}\right) $ is cyclic.\ The next corollary is very usefull in the next
section.

\begin{corollary}
	\label{gr}For all $n>0,$ $\varphi ^{2}(n)\leq \Phi ^{2}(n).$
\end{corollary}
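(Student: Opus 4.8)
The plan is to prove Corollary~\ref{gr} by reducing the inequality $\varphi^{2}(n) \leq \Phi^{2}(n)$ to its behaviour on prime powers, then comparing the two sides factor by factor. The key structural fact is that $\varphi^{2}(n)$ is multiplicative (Theorem~\ref{phk}), whereas $\Phi^{2}(n) = \varphi(\varphi(n))$ is \emph{not} multiplicative in general, because applying $\varphi$ to a product introduces the correction factors $\gcd/\varphi(\gcd)$ described in Theorem~\ref{pro}. Since each such factor $\gcd(a,b)/\varphi(\gcd(a,b))$ is at least $1$, the effect of these corrections is always to make $\Phi^{2}(n)$ \emph{no smaller} than the naive product of the $\Phi^{2}$-values over prime powers. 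This monotonicity is exactly what pushes the inequality in the desired direction.

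First I would write $n = p_{1}^{\alpha_{1}} \cdots p_{r}^{\alpha_{r}}$ and record that, by multiplicativity of $\varphi^{2}$,
\[
\varphi^{2}(n) = \prod_{i=1}^{r} \varphi^{2}(p_{i}^{\alpha_{i}}).
\]
Next I would establish the base case $r=1$: for every prime power $p^{\alpha}$ one has $\varphi^{2}(p^{\alpha}) \leq \Phi^{2}(p^{\alpha})$. In fact Lemmas~\ref{lem14} and~\ref{pr} already give \emph{equality} for the prime powers that are solutions, and for the remaining cases ($p=2$ with $\alpha \geq 4$, or the configurations ruled out in the earlier lemmas) the same explicit computations via Theorem~\ref{phi} show that the $\Phi^{2}$ side is the larger one, since the discrepancy there is exactly a surplus factor of $2$ (e.g. $2^{\alpha-3} \leq 2^{\alpha-2}$ for $p=2$). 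So the prime-power base case is a short case check reusing the formulas already proved.

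The inductive step is where the gcd-correction factors do the work. Writing $n = p_{r}^{\alpha_{r}} \cdot m$ with $\gcd(p_{r}^{\alpha_{r}}, m)=1$, I would apply $\varphi$ and use the identity $\varphi(ab) = \varphi(a)\varphi(b)\,\gcd(a,b)/\varphi(\gcd(a,b))$ (the $n=2$ instance underlying Theorem~\ref{pro}) at the \emph{inner} level of $\Phi^{2}(n) = \varphi(\varphi(n))$. The point is that $\varphi(n) = \varphi(p_{r}^{\alpha_{r}})\,\varphi(m)$ splits cleanly (coprimality), but applying the \emph{outer} $\varphi$ to this product introduces a factor $\gcd(\varphi(p_{r}^{\alpha_{r}}), \varphi(m))/\varphi(\gcd(\cdot)) \geq 1$. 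Hence
\[
\Phi^{2}(n) \geq \Phi^{2}(p_{r}^{\alpha_{r}})\,\Phi^{2}(m),
\]
and combining this with the multiplicativity of $\varphi^{2}$ and the induction hypothesis $\varphi^{2}(m) \leq \Phi^{2}(m)$ together with the base case yields
\[
\varphi^{2}(n) = \varphi^{2}(p_{r}^{\alpha_{r}})\,\varphi^{2}(m) \leq \Phi^{2}(p_{r}^{\alpha_{r}})\,\Phi^{2}(m) \leq \Phi^{2}(n).
\]

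The main obstacle I anticipate is making the inequality $\Phi^{2}(n) \geq \Phi^{2}(p_{r}^{\alpha_{r}})\,\Phi^{2}(m)$ fully rigorous, since it requires the \emph{superadditivity} of $\varphi$ under the $\gcd$-correction, i.e. that $\varphi(ab) \geq \varphi(a)\varphi(b)$ always, which follows from $\gcd(a,b) \geq \varphi(\gcd(a,b))$ — a completely general and elementary fact, but one that must be invoked cleanly rather than through the full apparatus of Theorem~\ref{pro}. A tidy alternative, which I would prefer for brevity, is to bypass the induction entirely and simply observe that Theorem~\ref{eq1} classifies exactly the equality cases, so it suffices to verify strict inequality for all $n$ outside that list; this again localizes to the prime-power and two-prime computations already carried out in the preceding lemmas, where in every non-solution case the $\Phi^{2}$ side was shown to exceed the $\varphi^{2}$ side by a factor that is a nonnegative power of $2$.
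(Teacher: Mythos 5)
Your main argument is correct, but it is organized differently from the paper's proof. The paper splits into two cases: for square-free $n=p_{1}p_{2}\cdots p_{k}$ it writes $\varphi^{2}(n)=\prod_{i}\varphi(p_{i}-1)$, expands $\Phi^{2}(n)=\varphi\bigl(\prod_{i}(p_{i}-1)\bigr)$ through the full machinery of Theorem \ref{pro}, and observes that every correction factor $\gcd/\varphi(\gcd)$ is at least $1$; the non-square-free case is then handled by pulling the odd factor $p_{i}^{\alpha-1}$ out of the outer $\varphi$ and repeating the argument (and the paper only writes this out for a single repeated prime). Your route --- multiplicativity of $\varphi^{2}$ (Theorem \ref{phk}), a prime-power base case read off from Theorem \ref{phi}, and then the elementary superadditivity $\varphi(ab)\geq\varphi(a)\varphi(b)$ (equivalently $d\geq\varphi(d)$ for $d=\gcd(a,b)$) applied inductively at the outer level of $\Phi^{2}$ --- rests on exactly the same underlying fact, but it bypasses Theorem \ref{pro} entirely and treats arbitrary factorizations uniformly. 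What the paper's version buys is reuse of the expansion it has already developed and an explicit form of the surplus factor $c$; what yours buys is brevity and a cleaner induction that covers all exponent patterns at once. One small correction: the base case for odd prime powers is Lemma \ref{lem14}, not Lemma \ref{pr} (which concerns $n=2^{\alpha}p^{\beta}$, not prime powers).

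Your proposed ``tidy alternative'' at the end, however, does not work as stated. Theorem \ref{eq1} and the lemmas behind it only classify where \emph{equality} holds; ruling out $\varphi^{2}(n)=\Phi^{2}(n)$ outside that list gives no information about the \emph{direction} of the inequality in the remaining cases, and the preceding lemmas do not supply directional bounds for every such $n$ (the case of three or more odd prime factors is only sketched in the paper, and the surplus factors there need not be powers of $2$: for instance $d/\varphi(d)=3$ when $d=6$). So that shortcut would be incomplete; your induction with superadditivity is the argument to keep.
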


\begin{proof}
	If $n$ is a sqaure-free integer then i.e. $n=p_{1}p_{2}\ldots p_{k}$ then, $%
	\varphi ^{2}(n)=\varphi ^{2}(p_{1}p_{2}\ldots p_{k})=\varphi
	^{2}(p_{1})\varphi ^{2}(p_{2})\ldots \varphi ^{2}(p_{k})=\varphi
	(p_{1}-1)\varphi (p_{2}-1)\ldots \varphi (p_{k}-1)$.
	
	On the other hand, \\$\Phi ^{2}(n)=\Phi ^{2}(p_{1}p_{2}\ldots p_{k})=\varphi
	(\varphi (p_{1})\varphi (p_{2})\ldots \varphi (p_{k}))=\varphi
	((p_{1}-1)(p_{2}-1)\ldots (p_{k}-1))$.
	
	Now let $k=2^{r}m$ then, by Theorem \ref{pro},
	
	$\varphi ((p_{1}-1)(p_{2}-1)\ldots
	(p_{k}-1))=\prod\limits_{t=0}^{m-1}[\prod\limits_{\substack{ t\times
			2^{r}<i<(1+t)\times 2^{r} \\ 2|i+1}}\left[ \varphi (p_{i}-1)\times \varphi
	(p_{i+1}-1)\times \frac{\gcd (p_{i}-1,p_{i+1}-1)}{\varphi \left( \gcd
		(p_{i}-1,p_{i+1}-1)\right) }\right] $
	
	$\times \prod\limits_{k=1}^{r-1}\left[ \prod\limits_{\substack{ t\times
			2^{r}<i<(1+t)\times 2^{r} \\ 2^{k+1}|i+1}}\frac{\gcd
		(\prod\limits_{j=0}^{2^{k}-1}p_{i+j}-1,\prod%
		\limits_{j=2^{k}}^{2^{k+1}-1}(p_{i+j}-1))}{\varphi \left( \gcd
		(\prod\limits_{j=0}^{2^{k}-1}p_{i+j}-1,\prod%
		\limits_{j=2^{k}}^{2^{k+1}-1}(p_{i+j}-1))\right) }\right] \times \frac{\gcd
		(\prod\limits_{i=1}^{2^{r}}p_{i-1+t\times
			2^{r}},\prod\limits_{i=2^{r}+1}^{2^{r}(m-t)}p_{i-1+t\times 2^{r}})}{\gcd
		(\prod\limits_{i=1}^{2^{r}}p_{i-1+t\times
			2^{r}},\prod\limits_{i=2^{r}+1}^{2^{r}(m-t)}p_{i-1+t\times 2^{r}})}]$
	
	$=\prod\limits_{i=1}^{k}\varphi (p_{i}-1)\times
	\prod\limits_{t=0}^{m-1}[\prod\limits_{\substack{ t\times
			2^{r}<i<(1+t)\times 2^{r} \\ 2|i+1}}\left( \frac{\gcd (p_{i}-1,p_{i+1}-1)}{%
		\varphi \left( \gcd (p_{i}-1,p_{i+1}-1)\right) }\right) $
	
	$\times \prod\limits_{k=1}^{r-1}\left[ \prod\limits_{\substack{ t\times
			2^{r}<i<(1+t)\times 2^{r} \\ 2^{k+1}|i+1}}\frac{\gcd
		(\prod\limits_{j=0}^{2^{k}-1}p_{i+j}-1,\prod%
		\limits_{j=2^{k}}^{2^{k+1}-1}(p_{i+j}-1))}{\varphi \left( \gcd
		(\prod\limits_{j=0}^{2^{k}-1}p_{i+j}-1,\prod%
		\limits_{j=2^{k}}^{2^{k+1}-1}(p_{i+j}-1))\right) }\right] \times \frac{\gcd
		(\prod\limits_{i=1}^{2^{r}}p_{i-1+t\times
			2^{r}},\prod\limits_{i=2^{r}+1}^{2^{r}(m-t)}p_{i-1+t\times 2^{r}})}{\varphi
		(\gcd (\prod\limits_{i=1}^{2^{r}}p_{i-1+t\times
			2^{r}},\prod\limits_{i=2^{r}+1}^{2^{r}(m-t)}p_{i-1+t\times 2^{r}}))}%
	=\prod\limits_{i=1}^{k}\varphi (p_{i}-1)\times c$ where $c\geq 1,$ since for
	all $n>0,$ $\frac{n}{\varphi (n)}\geq 1,$ then $\varphi ^{2}(n)\leq \Phi
	^{2}(n)$.
	
	Now if $n$ is not a square-free, then there exist at least one prime factor
	has a power greater then one, say $p_{i},$ i.e. $n=$ $p_{1}p_{2}\ldots
	p_{i}^{\alpha }\ldots p_{k}$ where $\alpha >1.$ We have,
	
	\[
	\varphi ^{2}(n)=(p_{i}-1)p_{i}^{\alpha -2}\varphi (p_{1}-1)\varphi
	(p_{2}-1)\ldots \varphi (p_{i}-1)\ldots \varphi (p_{k}-1)
	\]
	
	On the other hand,
	
	\[
	\Phi ^{2}(n)=\varphi ((p_{1}-1)(p_{2}-1)\ldots p_{i}^{\alpha
		-1}(p_{i}-1)\ldots \varphi (p_{k}))
	\]
	
	and since $p_{i}^{\alpha -1}$ is odd and $p_{j}-1$ is even for all $j$ then, \\\\
	$~~~~\varphi ((p_{1}-1)(p_{2}-1)\ldots p_{i}^{\alpha -1}(p_{i}-1)\ldots \varphi
	(p_{k}))\\=\varphi (p_{i}^{\alpha -1})\varphi ((p_{1}-1)(p_{2}-1)\ldots
	(p_{i}-1)\ldots \varphi (p_{k}))$
	
	$=(p_{i}-1)p_{i}^{\alpha -2}\varphi (p_{1}-1)\varphi (p_{2}-1)\ldots \varphi
	(p_{i}-1)\ldots \varphi (p_{k}-1)\times c$ but $c\geq 1$ therefore $\varphi
	^{2}(n)\leq \Phi ^{2}(n)$ for all integers $n.$
\end{proof}

\subsection{The equation $\protect\varphi ^{3}(n)=\Phi ^{3}(n).$}

In this section, we solve the equation 

\begin{equation}
\varphi ^{3}(n)=\Phi ^{3}(n)\text{.}  \label{3}
\end{equation}

We start by some lemmas to obtain a general solution of Eq. \ref{3}.

\begin{lemma}
	If $\varphi ^{3}(n)=\Phi ^{3}(n)$ and $n=2^{\alpha }$, where $\alpha >0$ then 
	$n=2,4$ or $8.$
\end{lemma}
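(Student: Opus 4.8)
The plan is to reduce the equation to a comparison of exponents of $2$, exactly paralleling the $k=2$ lemma proved earlier. First I would evaluate the left-hand side using Theorem \ref{phi} with $k=3$ and $p=2$, for which the threshold $2k$ equals $6$: this gives $\varphi^3(2^\alpha)=1$ when $\alpha<6$ and $\varphi^3(2^\alpha)=2^{\alpha-5}$ when $\alpha\geq 6$.

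Next I would compute the right-hand side $\Phi^3(2^\alpha)=\varphi(\varphi(\varphi(2^\alpha)))$ by iterating the classical identity $\varphi(2^m)=2^{m-1}$ for $m\geq 1$, together with $\varphi(1)=1$. Applying $\varphi$ three times and tracking when the exponent reaches $0$ (so that the iteration bottoms out at $1$) yields $\Phi^3(2^\alpha)=2^{\alpha-3}$ for $\alpha\geq 3$, while $\Phi^3(2)=\Phi^3(4)=1$. Thus $\Phi^3(2^\alpha)=2^{\max(\alpha-3,\,0)}$ for every $\alpha>0$.

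Then I would split the argument into two ranges. For $\alpha\geq 6$ both sides are genuine powers of $2$, and the equation reads $2^{\alpha-5}=2^{\alpha-3}$, which forces $-5=-3$, an absurdity; hence there is no solution in this range. For $\alpha<6$ the left-hand side is $1$, so the equation becomes $\Phi^3(2^\alpha)=1$, i.e. $2^{\max(\alpha-3,\,0)}=1$, which holds precisely when $\alpha\leq 3$. Combined with $\alpha>0$ this leaves $\alpha\in\{1,2,3\}$, that is $n=2,4,8$. The remaining values $\alpha=4,5$ fail because there $\varphi^3(2^\alpha)=1$ while $\Phi^3(2^\alpha)$ equals $2$ and $4$ respectively.

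The only delicate point — a matter of care rather than a genuine obstacle — is the behavior of $\Phi^3$ at small $\alpha$, where the generic formula $2^{\alpha-3}$ would give a negative exponent; there one must invoke $\varphi(1)=1$ directly, just as the $k=2$ lemma does implicitly. Everything else is a routine comparison of exponents of $2$.
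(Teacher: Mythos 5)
Your proposal is correct and follows essentially the same route as the paper: evaluate $\varphi^3(2^\alpha)$ via Theorem \ref{phi} (giving $1$ for $\alpha<6$ and $2^{\alpha-5}$ for $\alpha\geq 6$), compute $\Phi^3(2^\alpha)$ by iterating $\varphi$, and compare exponents in the two ranges. Your treatment is in fact slightly more careful than the paper's, which writes $1=2^{\alpha-3}$ and concludes $\alpha=1,2,3$ without explicitly invoking $\varphi(1)=1$ for $\alpha<3$ — the point you flag with the $2^{\max(\alpha-3,0)}$ formulation.
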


\begin{proof}
	Let $n=2^{\alpha }$ where $\alpha <6.$ If $\varphi ^{3}(2^{\alpha })=\Phi
	^{3}(2^{\alpha })$ then, $1=2^{\alpha -3}$ i.e. $\alpha =1,2$ or $3.$ Now let $n=2^{\alpha }$, where $\alpha \geq 6$. If $\varphi ^{3}(2^{\alpha
	})=\Phi ^{3}(2^{\alpha })$ then, $2^{\alpha -3}=2^{\alpha -5}$ which is
	impossible. Then the only solution in this case are $2,4$ and $8.$
\end{proof}

\begin{lemma}
	\label{prime}If $\varphi ^{3}(n)=\Phi ^{3}(n)$ where $n$ is an odd prime
	then $n=3,5$ or $n$ is a prime of the form $q^{\alpha }+1,$ $2q^{\alpha }+1$
	where $q$ is an odd prime.
\end{lemma}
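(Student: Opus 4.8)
The plan is to reduce the level-$3$ equation for a prime argument to the level-$2$ equation already solved in Theorem~\ref{eq1}, and then simply read off the admissible primes. The reduction rests on the proposition stating that for an odd prime $p$ one has $\varphi^{k}(p)=\varphi^{k-1}(p-1)$; taking $k=3$ gives $\varphi^{3}(p)=\varphi^{2}(p-1)$. On the other side, since $\varphi(p)=p-1$ for a prime $p$, the iterated classical totient satisfies $\Phi^{3}(p)=\varphi(\varphi(\varphi(p)))=\varphi(\varphi(p-1))=\Phi^{2}(p-1)$. Hence, for $n=p$ prime, the equation $\varphi^{3}(n)=\Phi^{3}(n)$ is \emph{equivalent} to
\[
\varphi^{2}(p-1)=\Phi^{2}(p-1).
\]
I would stress that this is a genuine biconditional, so no solution is created or destroyed in the passage from level $3$ to level $2$.

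Next I would invoke Theorem~\ref{eq1} with argument $m=p-1$: the identity $\varphi^{2}(m)=\Phi^{2}(m)$ holds if and only if $m\in\{2,4\}$, or $m=q^{\alpha}$, or $m=2q^{\alpha}$ for some odd prime $q$ and $\alpha>0$. Writing $p=m+1$ and retaining only those $m+1$ that are actually prime, the four shapes translate directly into the four families of the statement: $m=2$ gives $p=3$, $m=4$ gives $p=5$, $m=q^{\alpha}$ gives $p=q^{\alpha}+1$, and $m=2q^{\alpha}$ gives $p=2q^{\alpha}+1$. Because $m\mapsto m+1$ is a bijection between the values $m$ solving the level-$2$ equation and the candidate primes $p$, this correspondence already produces the complete list claimed.

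The one point that deserves care is parity. Since $n=p$ is an odd prime, $m=p-1$ is even, so the odd-prime-power shape $m=q^{\alpha}$ can never actually be realised; consequently the clause $p=q^{\alpha}+1$ (with $q$ odd) contributes no odd prime, and the genuinely new solutions beyond $p=3,5$ are exactly those of the form $p=2q^{\alpha}+1$. I would record this as a short remark confirming consistency: keeping the $q^{\alpha}+1$ clause is harmless since it is vacuous here, and the effective content is $p\in\{3,5\}$ together with the primes $p=2q^{\alpha}+1$. No estimate or structural decomposition is needed—the whole weight of the argument sits on the reduction of the first paragraph and on Theorem~\ref{eq1}—so the main (and essentially the only) obstacle is verifying the two identities $\varphi^{3}(p)=\varphi^{2}(p-1)$ and $\Phi^{3}(p)=\Phi^{2}(p-1)$ that collapse the level-$3$ problem onto the already-solved level-$2$ one.
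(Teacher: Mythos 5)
Your proposal is correct and takes essentially the same route as the paper's proof: reduce $\varphi^{3}(p)=\Phi^{3}(p)$ to $\varphi^{2}(p-1)=\Phi^{2}(p-1)$ via the identities $\varphi^{3}(p)=\varphi^{2}(p-1)$ and $\Phi^{3}(p)=\varphi(\varphi(p-1))=\Phi^{2}(p-1)$, and then apply Theorem~\ref{eq1} to $m=p-1$ to read off $p-1\in\{2,4,q^{\alpha},2q^{\alpha}\}$. Your additional parity remark---that $p-1$ is even, so the clause $p=q^{\alpha}+1$ with $q$ an odd prime is vacuous and the effective solutions are $p\in\{3,5\}$ and $p=2q^{\alpha}+1$---is a small sharpening that the paper does not state, but it does not alter the argument.
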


\begin{proof}
	let $\varphi ^{3}(n)=\Phi ^{3}(n)$ and $n=p$ where $p$ is an odd prime. If $%
	\varphi ^{3}(p)=\Phi ^{3}(p)$ then,  
	\[
	\varphi ^{2}(p-1)=\Phi ^{2}(p-1)
	\]
	then by Theorem \ref{eq1}, $p-1=2,4,q^{\alpha },$ or $2q^{\alpha }$ i.e. $%
	n=3,5$ or $n$ is a prime of the form $q^{\alpha }+1,$ $2q^{\alpha }+1$ where 
	$q$ is an odd prime.
\end{proof}

\begin{lemma}
	If $\varphi ^{3}(n)=\Phi ^{3}(n)$ and $n=p^{2}$ where $p$ is an odd prime
	then $p=3.$
\end{lemma}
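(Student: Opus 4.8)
The plan is to evaluate both sides of the equation explicitly as functions of the single quantity $p-1$ and then to squeeze them together using Corollary \ref{gr}. First I would compute the left-hand side: since $n=p^{2}$ has $\alpha=2<k=3$, Theorem \ref{phi} gives $\varphi^{3}(p^{2})=\varphi^{2}(p-1)\,\varphi^{2}(p)$, and the earlier relation $\varphi^{k}(p)=\varphi^{k-1}(p-1)$ collapses the second factor, so that $\varphi^{3}(p^{2})=\varphi^{2}(p-1)\,\varphi(p-1)$.

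For the right-hand side I would iterate the ordinary Euler function. We have $\varphi(p^{2})=p(p-1)$, and since $\gcd(p,p-1)=1$ the next application gives $\varphi\big(p(p-1)\big)=(p-1)\varphi(p-1)$, whence $\Phi^{3}(p^{2})=\varphi\big((p-1)\varphi(p-1)\big)$. Expanding this final $\varphi$ of a product with the identity $\varphi(ab)=\varphi(a)\varphi(b)\,\gcd(a,b)/\varphi(\gcd(a,b))$ yields
\[
\Phi^{3}(p^{2})=\varphi(p-1)\,\Phi^{2}(p-1)\,\frac{\gcd(p-1,\varphi(p-1))}{\varphi(\gcd(p-1,\varphi(p-1)))}.
\]

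Equating the two expressions and cancelling the common factor $\varphi(p-1)$ leaves
\[
\varphi^{2}(p-1)=\Phi^{2}(p-1)\cdot R,\qquad R:=\frac{\gcd(p-1,\varphi(p-1))}{\varphi(\gcd(p-1,\varphi(p-1)))}.
\]
Here $R\ge 1$ because $m/\varphi(m)\ge 1$ for every positive integer $m$, while Corollary \ref{gr} gives $\varphi^{2}(p-1)\le\Phi^{2}(p-1)$. These two bounds pull in opposite directions and therefore force $R=1$ (and, as a byproduct, $\varphi^{2}(p-1)=\Phi^{2}(p-1)$, which is consistent with Theorem \ref{eq1}). Since $d=\varphi(d)$ holds only for $d=1$, the equality $R=1$ means precisely $\gcd(p-1,\varphi(p-1))=1$.

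Finally I would read off $p$ from this coprimality. As $p$ is an odd prime, $p-1$ is even, so $\gcd(p-1,\varphi(p-1))=1$ forces $\varphi(p-1)$ to be odd; but $\varphi(m)$ is odd only for $m\in\{1,2\}$, hence $p-1=2$ and $p=3$. A direct check ($\varphi^{3}(9)=\varphi^{2}(2)\varphi(2)=1=\Phi^{3}(9)$) confirms that $p=3$ indeed works. The only delicate point in the argument is the simultaneous squeeze in the displayed identity: recognizing that the gcd-correction factor $R$ is always $\ge 1$ while Corollary \ref{gr} pushes the equality the other way, so both inequalities must be tight at once. Everything else is a routine unwinding of Theorem \ref{phi} and the product formula for $\varphi$.
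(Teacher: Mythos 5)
Your proof is correct, but it takes a genuinely different route from the paper's. The paper reduces the hypothesis to $\varphi^{2}(p(p-1))=\Phi^{2}(p(p-1))$ and then invokes Theorem \ref{eq1}, the full classification of solutions of $\varphi^{2}(n)=\Phi^{2}(n)$: this forces $p(p-1)\in\{2,4,q^{\alpha},2q^{\alpha}\}$, and a short case analysis (only $p(p-1)=2q^{\alpha}$ survives, whence $q=p$, $\alpha=1$, $p-1=2$) yields $p=3$. You instead expand both sides explicitly, cancel the common factor $\varphi(p-1)$, and squeeze the resulting identity $\varphi^{2}(p-1)=\Phi^{2}(p-1)\cdot R$ between the two opposing bounds $R\ge 1$ and Corollary \ref{gr}, forcing $R=1$, i.e.\ $\gcd(p-1,\varphi(p-1))=1$, after which parity of $p-1$ finishes the argument. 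Notably, your squeeze is precisely the technique the paper reserves for the \emph{next} lemma (the case $n=p^{\alpha}$ with $\alpha>2$), so your argument in effect unifies the $\alpha=2$ and $\alpha>2$ cases under a single method and bypasses the classification theorem entirely, needing only the inequality of Corollary \ref{gr}. What each approach buys: the paper's is shorter once Theorem \ref{eq1} is in hand and makes the structural reason for the answer transparent (the link to cyclicity of $U(\mathbb{Z}_{n})$), while yours is more self-contained and extends more readily to higher prime powers.
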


\begin{proof}
	let $\varphi ^{3}(n)=\Phi ^{3}(n)$ and $n=p^{2}$ where $p$ is an odd prime.
	If $\varphi ^{3}(p^{2})=\Phi ^{3}(p^{2})$ then,%
	\[
	\varphi ^{2}((p-1)p)=\Phi ^{2}(p(p-1))
	\]
	then by Theorem \ref{eq1} $p(p-1)=2,4,q^{\alpha }$ or $2q^{\alpha }$. If $p(p-1)=2$, then $p=2$ which is rejected since $p$ is odd. Moreover, there is no such prime $p$ satisfying $p(p-1)=4$. In addition the case, $p(p-1)=q^{\alpha }$ \ is rejected since $q^{\alpha }$ is an odd number while $p(p-1)$ is even. Now, we discuss the final case  $%
	p(p-1)=2q^{\alpha }$. Which gives $p-1=2$, $q=p$, and $\alpha =1,$ i.e. $n=9$%
	.
\end{proof}

\begin{lemma}
	If $\varphi ^{3}(n)=\Phi ^{3}(n)$, $n=p^{\alpha }$ and $p$ is an odd prime, and $\alpha >2$ then $p=3.$
\end{lemma}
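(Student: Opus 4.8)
The plan is to compute both sides of Eq. \ref{3} explicitly for $n=p^{\alpha}$ with $\alpha\geq 3$, cancel the common power of $p$, and reduce the equation to a single gcd condition that can hold only when $p=3$.

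First I would evaluate the left-hand side using Theorem \ref{phi} in the regime $\alpha\geq k=3$, together with $\varphi^{1}(p)=p-1$ and the Proposition $\varphi^{2}(p)=\varphi(p-1)$, which yields
\[
\varphi^{3}(p^{\alpha})=p^{\alpha-3}(p-1)\,\varphi(p-1)\,\varphi^{2}(p-1).
\]
For the right-hand side I would iterate the ordinary totient. Since $\Phi^{2}(p^{\alpha})=\varphi(\varphi(p^{\alpha}))=p^{\alpha-2}(p-1)\varphi(p-1)$, and since $\varphi(p-1)\leq p-2<p$ forces $p\nmid(p-1)\varphi(p-1)$, the $p$-part separates cleanly and
\[
\Phi^{3}(p^{\alpha})=\varphi\bigl(p^{\alpha-2}(p-1)\varphi(p-1)\bigr)=p^{\alpha-3}(p-1)\,\varphi\bigl((p-1)\varphi(p-1)\bigr).
\]
Equating the two expressions and cancelling $p^{\alpha-3}(p-1)$ (which incidentally shows the equation does not depend on $\alpha$) reduces Eq. \ref{3} to
\[
\varphi(p-1)\,\varphi^{2}(p-1)=\varphi\bigl((p-1)\varphi(p-1)\bigr).
\]

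Next I would rewrite the right-hand side with the identity $\varphi(ab)=\varphi(a)\varphi(b)\,\gcd(a,b)/\varphi(\gcd(a,b))$, which is the $n=2$ instance of Lemma \ref{powerof2}, applied to $a=p-1$ and $b=\varphi(p-1)$. Setting $g=\gcd(p-1,\varphi(p-1))$ and cancelling the factor $\varphi(p-1)$, the equation becomes
\[
\varphi^{2}(p-1)=\Phi^{2}(p-1)\,\frac{g}{\varphi(g)}.
\]

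The final and crucial step is to read off $g=1$. By Corollary \ref{gr} we have $\varphi^{2}(p-1)\leq\Phi^{2}(p-1)$, while $g/\varphi(g)\geq 1$ always holds; hence the right-hand side is at least $\Phi^{2}(p-1)\geq\varphi^{2}(p-1)$, and equality across the chain forces both $\varphi^{2}(p-1)=\Phi^{2}(p-1)$ and $g/\varphi(g)=1$, i.e.\ $g=1$. But $p-1$ is even, so $\gcd(p-1,\varphi(p-1))=1$ can hold only when $\varphi(p-1)$ is odd, which occurs exactly for $p-1\in\{1,2\}$; as $p$ is an odd prime this leaves $p-1=2$, that is $p=3$. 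I expect the main obstacle to be arranging the cancellations cleanly enough that the two monotonicity facts, $\varphi^{2}\leq\Phi^{2}$ and $g/\varphi(g)\geq 1$, combine to pin down $g=1$; once that is secured the conclusion $p=3$ is immediate, and one verifies directly that $p=3$ does satisfy the equation for every $\alpha\geq 3$.
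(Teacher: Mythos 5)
Your proof is correct and takes essentially the same route as the paper's: both reduce the equation to $\varphi^{2}(p-1)=\Phi^{2}(p-1)\cdot\frac{\gcd(p-1,\varphi(p-1))}{\varphi(\gcd(p-1,\varphi(p-1)))}$, then squeeze using Corollary \ref{gr} together with $g/\varphi(g)\geq 1$ to force $\gcd(p-1,\varphi(p-1))=1$, and finish by the parity argument giving $p-1=2$. The only cosmetic difference is that you compute $\Phi^{3}(p^{\alpha})$ by iterating $\varphi$ directly, whereas the paper routes the same computation through the identity $\Phi^{2}(p^{\alpha-1}(p-1))=(p-1)p^{\alpha-3}\Phi^{2}((p-1)^{2})$.
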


\begin{proof}
	Let $\varphi ^{3}(n)=\Phi ^{3}(n)$, $n=p^{\alpha }$ where $p$ is an odd
	prime, and $\alpha >2$ then $\varphi ^{3}(p^{\alpha })=\Phi ^{3}(p^{\alpha })
	$ gives,%
	\begin{equation}
	\varphi ^{2}((p-1)p)\times (p-1)\times p^{\alpha -3}=\Phi ^{2}(p^{\alpha
		-1}(p-1)).  \label{11}
	\end{equation}
	We have,%
	\begin{eqnarray}
	&&\Phi ^{2}(p^{\alpha -1}(p-1))  \nonumber \\
	&=&\varphi (p^{\alpha -2}\times (p-1)\times \varphi (p-1))  \nonumber \\
	&=&(p-1)\times p^{\alpha -3}\times \Phi ^{2}((p-1)^{2})  \label{33}
	\end{eqnarray}
	
	combining equations $\left( \ref{11}\right) $ and $\left( \ref{33}\right) $ we get, 
	
	\[
	\varphi ^{2}((p-1)p)=\varphi ((p-1)\times \varphi (p-1))
	\]
	
	i.e.
	
	\[
	\varphi ^{2}(p-1)=\Phi ^{2}(p-1)\times \frac{\gcd [p-1,\varphi (p-1)]}{%
		\varphi (\gcd [p-1,\varphi (p-1)])}
	\]
	however, $\frac{\gcd [p-1,\varphi (p-1)]}{\varphi (\gcd [p-1,\varphi (p-1)])}%
	\geq 1$ then, $\varphi ^{2}(p-1)\geq \Phi ^{2}(p-1)$, from Corollary $\left( \ref{gr}%
	\right) $ we have 
	\[
	\varphi ^{2}(p-1)\leq \Phi ^{2}(p-1)
	\]%
	then,
	
	\[
	\varphi ^{2}(p-1)=\Phi ^{2}(p-1)
	\]
	which leads us to, $\frac{\gcd [p-1,\varphi (p-1)]}{\varphi (\gcd
		[p-1,\varphi (p-1)])}=1$ i.e. $\gcd [p-1,\varphi (p-1)]=1$ only if $\varphi
	(p-1)=1$ i.e. $p-1=2$ therefore, $p=3.$
\end{proof}

\begin{lemma}
	Let $n=2^{\alpha }p$ where $\alpha <6$ and $p$ is an odd prime and $%
	\varphi ^{3}(n)=\Phi ^{3}(n)$, then $\alpha =1$ and $p=3,5$ or $n$ is a
	prime of the form $q^{\alpha }+1,$ $2q^{\alpha }+1$ where $q$ is an odd
	prime.
\end{lemma}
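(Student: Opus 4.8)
The plan is to collapse the third–order equation $\varphi^3(n)=\Phi^3(n)$ onto the second–order equation already settled in Theorem \ref{eq1}, and then to exploit the inequality of Corollary \ref{gr} to pin down the exponent $\alpha$. First I would compute the left–hand side. Since $\varphi^k$ is multiplicative (Theorem \ref{phk}) and $k=3$ makes $2k=6>\alpha$, Theorem \ref{phi} gives $\varphi^3(2^\alpha)=1$; combined with the Proposition $\varphi^3(p)=\varphi^2(p-1)$ this yields
\[
\varphi^3(n)=\varphi^3(2^\alpha)\,\varphi^3(p)=\varphi^2(p-1),
\]
an expression that no longer sees $\alpha$.

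For the right–hand side I would peel off ordinary totients one at a time. Using $\varphi(2^\alpha p)=2^{\alpha-1}(p-1)$ we get $\Phi^3(n)=\Phi^2\!\left(2^{\alpha-1}(p-1)\right)$. The heart of the computation is to expand $\varphi\!\left(2^{\alpha-1}(p-1)\right)$ by the product–of–totients identity of Theorem \ref{pro} (equivalently Lemma \ref{powerof2}): because $\gcd(2^{\alpha-1},p-1)$ is a power of $2$ and $p-1$ is even, the $\gcd$–correction factor equals $2$ whenever $\alpha\geq 2$, so that $\varphi\!\left(2^{\alpha-1}(p-1)\right)=2^{\alpha-1}\varphi(p-1)$. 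Applying $\varphi$ a second time and tracking the correction factor again, I expect to reach $\Phi^3(n)=2^{\alpha-1}\Phi^2(p-1)$ for $\alpha\geq 2$.

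Equating the two sides reduces the problem to $\varphi^2(p-1)=2^{\alpha-1}\Phi^2(p-1)$. If $\alpha\geq 2$ then $2^{\alpha-1}\geq 2$, and Corollary \ref{gr} ($\varphi^2(p-1)\leq \Phi^2(p-1)$) forces
\[
\varphi^2(p-1)=2^{\alpha-1}\Phi^2(p-1)\geq 2\,\Phi^2(p-1)\geq 2\,\varphi^2(p-1),
\]
which is impossible as $\varphi^2(p-1)\geq 1$; hence $\alpha=1$. With $\alpha=1$ the equation becomes exactly $\varphi^2(p-1)=\Phi^2(p-1)$, and Theorem \ref{eq1} gives $p-1\in\{2,4,q^{\beta},2q^{\beta}\}$, i.e. $p=3,5$ or $p=q^{\beta}+1,\,2q^{\beta}+1$ with $q$ an odd prime, as claimed.

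The step I expect to be the main obstacle is the careful accounting of the $\gcd$–correction factors, and specifically the parity of $\varphi(p-1)$. The clean reduction $\varphi(2^{\alpha-1}M)=2^{\alpha-1}\varphi(M)$ that I used with $M=\varphi(p-1)$ needs $M$ to be even, which holds precisely when $p-1>2$. The borderline prime $p=3$, where $\varphi(p-1)=1$ is odd, must therefore be examined separately, since there the correction factor degenerates to $1$ and the displayed contradiction no longer closes; I would treat this small case directly from the explicit formulas of Theorem \ref{phi} before concluding, as it is the delicate point on which the $\alpha=1$ assertion hinges.
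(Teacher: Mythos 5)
Your reduction is, in substance, the paper's own proof: both collapse the equation to $\varphi^{2}(p-1)=2^{\alpha-1}\Phi^{2}(p-1)$, invoke Corollary \ref{gr} to force $2^{\alpha-1}=1$, and then classify $p$ via Theorem \ref{eq1} (the paper routes this through Lemma \ref{prime}, which is the same classification). The one place you depart from the paper is the place that matters: you noticed that the second application of the identity $\varphi(2^{\alpha-1}M)=2^{\alpha-1}\varphi(M)$, with $M=\varphi(p-1)$, requires $\varphi(p-1)$ to be even, i.e.\ $p>3$. The paper performs this step without comment, so its proof silently excludes $p=3$ while its conclusion does not.

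The case you set aside, however, cannot be patched, because it yields a counterexample to the lemma as stated. Take $p=3$ and $2\le\alpha\le 5$. By multiplicativity and Theorem \ref{phi},
\begin{equation*}
\varphi^{3}(2^{\alpha}\cdot 3)=\varphi^{3}(2^{\alpha})\,\varphi^{3}(3)=1\cdot\varphi^{2}(2)=1,
\end{equation*}
while
\begin{equation*}
\Phi^{3}(2^{\alpha}\cdot 3)=\varphi(\varphi(\varphi(2^{\alpha}\cdot 3)))=\varphi(\varphi(2^{\alpha}))=2^{\alpha-2},
\end{equation*}
and these agree precisely when $\alpha=2$. Thus $n=12$ satisfies $\varphi^{3}(n)=\Phi^{3}(n)$ with $\alpha=2\neq 1$; this is consistent with the paper's concluding theorem, which lists $12$ among the solutions, in direct contradiction with the present lemma. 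So your argument is correct and complete for $p>3$ (where it coincides with the paper's), but no separate treatment of $p=3$ can rescue the stated conclusion: the lemma needs to be amended so that its conclusion reads ``either $\alpha=1$ with the stated classification of $p$, or $n=12$,'' and with that amendment your proof closes once the computation above is inserted as the $p=3$ case.
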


\begin{proof}
	\label{rem1}Let $n=2^{\alpha }p$ where $\alpha <6$ and $p$ is an odd prime,
	then $\varphi ^{3}(n)=\Phi ^{3}(n)$ implies,
	
	\[
	\varphi ^{3}(p)=\varphi (2^{\alpha -2}\varphi (p-1)\frac{\gcd (2^{\alpha
			-1},p-1)}{\varphi (\gcd (2^{\alpha -1},p-1))})
	\]
	
	following the same procedure of the Proof of Lemma \ref{rem} we get,%
	\[
	\varphi ^{2}(p-1)=2^{\alpha -1}\Phi ^{2}(p-1)
	\]
	we conclude that,%
	\[
	\varphi ^{2}(p-1)\geq \Phi ^{2}(p-1)
	\]
	
	however by Corollary \ref{gr}, $\varphi ^{2}(p-1)\leq \Phi ^{2}(p-1)$ then, $%
	\varphi ^{2}(p-1)=\Phi ^{2}(p-1)$ and $2^{\alpha -1}=1$. Finally, by Theorem %
	\ref{prime}, we conclude the following, $p=3,5$ or $p$ is a prime of the
	form $q^{\beta }+1,$ $2q^{\beta }+1$ where $q$ is an odd prime, and $\alpha
	=1.$
\end{proof}

\begin{lemma}
	Let $n=2^{\alpha }p$ where $\alpha \geq 6$ and $p$ is an odd prime, then $n$
	does not satisfy $\varphi ^{3}(n)=\Phi ^{3}(n)$.
\end{lemma}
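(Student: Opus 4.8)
The plan is to mirror the structure of Lemma \ref{rem1} but exploit the size constraint $\alpha \geq 6$ to force a contradiction rather than a solvable equation. Let $n = 2^{\alpha} p$ with $\alpha \geq 6$ and $p$ an odd prime. First I would write out both sides of $\varphi^{3}(n) = \Phi^{3}(n)$ explicitly using multiplicativity of $\varphi^{3}$ (Theorem \ref{phk}) together with the formula of Theorem \ref{phi}. On the left, $\varphi^{3}(n) = \varphi^{3}(2^{\alpha}) \cdot \varphi^{3}(p)$, where by Theorem \ref{phi} we have $\varphi^{3}(2^{\alpha}) = 2^{\alpha - 5}$ precisely because $\alpha \geq 6 > 2k = 6$ fails only at $\alpha = 6$; so the key new feature is that the $2$-part contributes a genuine nontrivial factor $2^{\alpha - 2k + 1} = 2^{\alpha - 5}$ instead of collapsing to $1$.

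Next I would compute the right-hand side $\Phi^{3}(2^{\alpha} p) = \varphi(\varphi(\varphi(2^{\alpha} p)))$ by peeling off one $\varphi$ at a time. The first application gives $\varphi(2^{\alpha} p) = 2^{\alpha - 1}(p-1)$, and since $p - 1$ is even this merges with the power of $2$; I would then apply Theorem \ref{pro} (or its $n = 2$ specialization) to track the $\gcd$ correction factors as the remaining two $\varphi$'s are applied. The goal is to reduce the identity to the same shape as in Lemma \ref{rem1}, namely an equation comparing $\varphi^{2}(p-1)$ against $\Phi^{2}(p-1)$ scaled by an explicit power of $2$ coming from the $2^{\alpha}$ factor. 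Carrying the $\alpha \geq 6$ bookkeeping through carefully, I expect to arrive at a relation of the form $2^{\alpha - 3}\varphi^{2}(p-1) = 2^{\alpha - 5} \cdot (\text{stuff}) \cdot \Phi^{2}(p-1)$, where the mismatch in the explicit powers of $2$ on the two sides is fixed and independent of $p$.

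The final step is to extract the contradiction. After cancelling the common factor $\varphi^{2}(p-1)$ (using $\varphi^{2}(p-1) = \Phi^{2}(p-1)$ forced by Corollary \ref{gr} exactly as in Lemma \ref{rem1}, i.e.\ the gcd-ratio is $\geq 1$ on one side and $\leq 1$ by the corollary, pinning it to equality), I would be left with an equation between two distinct fixed powers of $2$, such as $2^{\alpha - 1} = 2^{\alpha - 3}$ or similar, which is impossible. This is analogous to the $\alpha \geq 4$ contradiction $2^{\alpha - 1} = 2^{\alpha - 3}$ that appears in the proof of Lemma \ref{pr}. Hence no $n = 2^{\alpha} p$ with $\alpha \geq 6$ can satisfy Eq.\ \eqref{3}.

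The main obstacle I anticipate is the careful bookkeeping of the $\gcd$-correction factors on the $\Phi^{3}$ side: because $\gcd(2^{\alpha - 1}, p - 1)$ is always a power of $2$, the ratio $\gcd / \varphi(\gcd)$ equals exactly $2$, and one must apply this simplification at the right stage (and verify it holds at each nested application of $\varphi$) so that all $p$-dependent quantities cancel cleanly and only a numerical power-of-$2$ discrepancy survives. Getting the exponent arithmetic right for the boundary value $\alpha = 6$ versus $\alpha > 6$ is the delicate point, but since both reduce to the same contradiction it does not require a separate case split.
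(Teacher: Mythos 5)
Your proposal follows essentially the same route as the paper: both arguments use $\varphi^{3}(2^{\alpha})=2^{\alpha-5}$ for $\alpha\geq 6$, peel off the $\varphi$'s on the $\Phi^{3}$ side using the fact that $\gcd(2^{\alpha-1},p-1)$ is a power of $2$ so each gcd-ratio equals $2$, reduce to a relation between $\varphi^{2}(p-1)$ and $\Phi^{2}(p-1)$ scaled by a fixed power of $2$, and then invoke Corollary \ref{gr} for the contradiction. The one thing to fix is your anticipated intermediate relation $2^{\alpha-3}\varphi^{2}(p-1)=2^{\alpha-5}\cdot(\text{stuff})\cdot\Phi^{2}(p-1)$: the powers of $2$ sit on the wrong sides (indeed this contradicts your own first step, which correctly puts $2^{\alpha-5}$ in front of $\varphi^{2}(p-1)$). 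The correct bookkeeping gives $2^{\alpha-5}\varphi^{2}(p-1)=2^{\alpha-1}\Phi^{2}(p-1)$, i.e. $\varphi^{2}(p-1)=16\,\Phi^{2}(p-1)$, which is exactly what the paper obtains, and the contradiction with Corollary \ref{gr} is then immediate since $\varphi^{2}(p-1)>\Phi^{2}(p-1)$. The direction of the mismatch is not cosmetic: if the surplus power of $2$ really did land on the $\varphi^{2}$ side as in your guessed form, you would be looking at $4\varphi^{2}(p-1)=c\,\Phi^{2}(p-1)$ with $c\geq 1$, which is perfectly consistent with $\varphi^{2}(p-1)\leq\Phi^{2}(p-1)$, and neither your ``pin to equality then compare powers of $2$'' step nor the paper's strict-inequality step would go through. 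So the strategy is right and matches the paper; just carry the exponent arithmetic through carefully so that the extra factor of $2^{4}$ multiplies $\Phi^{2}(p-1)$.
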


\begin{proof}
	Let $n=2^{\alpha }p$ where $\alpha \geq 6$ and $p$ is an odd prime, suppose
	by contradiction that%
	\[
	\varphi ^{3}(2^{\alpha }p)=\Phi ^{3}(2^{\alpha }p)
	\]
	following the same steps as Proof \ref{rem1} we get,
	
	\[
	\varphi ^{2}(p-1)=16\Phi ^{2}(p-1)
	\]
	then,
	
	\[
	\varphi ^{2}(p-1)>\Phi ^{2}(p-1)
	\]
	
	which contradicts Corollary \ref{gr}, then $n$ does not satisfy Eq. \ref{3}.
\end{proof}

We note that, by using similar arguments mentioned in the previous proofs, one
can prove the following,

	\begin{enumerate}
		\item Let $n=2^{\alpha }p^{2}$ where $\alpha \geq 1$. If $\varphi
		^{3}(n)=\Phi ^{3}(n),$ then, $p=3$ and $\alpha =1$.
		
		\item Let $n=2^{\alpha }p^{\beta }$ where $\alpha \geq 1$ and $\beta \geq 1$
		and $p$ is an odd prime, then $p=3$ and $\alpha =1$.
		
		\item If $n$ two or more odd prime factors then $\varphi ^{3}(n)\neq \Phi
		^{3}(n).$
		
		We summarize the Lemmas to formulate our main result of this section stated in the next theorem.
	\end{enumerate}

\begin{theorem}
	$\varphi ^{3}(n)=\Phi ^{3}(n)$ if and only if $n=5,10,$ $12,$ a divisor of $8
	$, $2\times 3^{a}$, or $2p$ where $p$ is a prime of the form $2q^{b}+1$
	where $q$ is an odd prime.
\end{theorem}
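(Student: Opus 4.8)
The plan is to assemble the theorem from the preceding lemmas by classifying $n$ according to the shape of its prime factorization, since the single genuine difficulty is that $\varphi^{3}$ is multiplicative (Theorem \ref{phk}) whereas $\Phi^{3}$, an iterated ordinary totient, is not. Thus the two sides of Eq.~\eqref{3} cannot be compared factor by factor; instead one controls the defect through the correction factors $\gcd(\cdot)/\varphi(\gcd(\cdot))$ produced by Theorem \ref{pro}, each of which is $\ge 1$, together with the resulting inequality $\varphi^{2}(m)\le\Phi^{2}(m)$ of Corollary \ref{gr}. The recurring engine throughout is the reduction of the degree-$3$ equation to a degree-$2$ one: by the Proposition $\varphi^{k}(p)=\varphi^{k-1}(p-1)$, every occurrence of $\varphi^{3}(p)$ becomes $\varphi^{2}(p-1)$, after which the sharp classification of Theorem \ref{eq1} applies.

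I would first settle the pure prime-power cases. The power-of-$2$ lemma yields exactly the divisors of $8$. For $n=p^{\alpha}$ with $p$ odd, the lemmas force $p=3$ once $\alpha\ge 2$ (the powers of $3$), while for $\alpha=1$ Lemma \ref{prime} together with Theorem \ref{eq1} reduces the question to $p-1\in\{2,4,2q^{b}\}$; since $q^{b}+1$ is even for odd $q$ and hence never an odd prime, the surviving primes are $p=3,5$ and $p=2q^{b}+1$. These are the values that reappear, multiplied by $2$, in the final list.

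Next comes the mixed case $n=2^{\alpha}p^{\beta}$. Expanding $\Phi^{2}$ across the product $2^{\alpha-1}(p-1)p^{\beta-1}$ by Theorem \ref{pro} and cancelling, the equation collapses to $\varphi^{2}(p-1)=2^{\alpha-1}\,\Phi^{2}(p-1)$, up to the exact power of $2$ dictated by the explicit formula of Theorem \ref{phi}. Corollary \ref{gr} forces the factor $2^{\alpha-1}$ down to $1$, so generically $\alpha=1$ and $\varphi^{2}(p-1)=\Phi^{2}(p-1)$, whence Theorem \ref{eq1} gives $p\in\{3,5,2q^{b}+1\}$; this produces $2\cdot 3^{a}$, $10$, and the family $2p$ with $p=2q^{b}+1$. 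The small boundary value $12=2^{2}\cdot 3$, where the surviving power of $2$ happens to vanish, must be recognised as a genuine exception by direct evaluation through Theorem \ref{phi}. To close the ``only if'' direction I would rule out two or more distinct odd primes: expanding $\Phi^{2}$ isolates a factor $\gcd(p-1,q-1)/\varphi(\gcd(p-1,q-1))$, and matching it against the multiplicative side forces $\gcd(p-1,q-1)=\varphi(\gcd(p-1,q-1))$, i.e.\ $\gcd(p-1,q-1)=1$, which is absurd because $p-1$ and $q-1$ are both even.

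For the ``if'' direction each listed value is checked directly from the explicit formula of Theorem \ref{phi} against the iterated totient, which is routine. The part I expect to be most delicate is the bookkeeping in the mixed case: because $\Phi^{3}$ does not factor, every comparison has to be threaded through the gcd-correction identity of Theorem \ref{pro}, and one must track precisely which power of $2$ survives after cancellation. Isolating the genuine small exceptions ($10$, $12$) from the cases where an extra power of $2$ is forbidden is exactly where exponent errors creep in; the safeguard is always to reduce to the degree-$2$ equation, so that equality in Corollary \ref{gr} becomes the lever that pins down the structure of $n$.
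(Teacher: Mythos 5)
Your proposal follows essentially the same route as the paper: the paper's proof of this theorem is precisely an assembly of the preceding case lemmas (powers of $2$ giving the divisors of $8$, odd prime powers giving $3,5$, powers of $3$ and primes $2q^{b}+1$, the mixed case $2^{\alpha}p^{\beta}$ collapsed to $\varphi^{2}(p-1)=2^{\alpha-1}\Phi^{2}(p-1)$ via Theorem \ref{pro} and pinned down by Corollary \ref{gr} and Theorem \ref{eq1}, and the exclusion of two or more distinct odd prime factors by the gcd-parity argument), exactly the classification you describe. If anything, your explicit direct-evaluation treatment of the boundary value $n=12$ is more careful than the paper's, whose lemma for $n=2^{\alpha}p$ forces $\alpha=1$ and so, read literally, would exclude $12$ from the solution set even though it belongs there.
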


We note that the value of $n$ mentioned in the above theorem, are some of
the values that make $U^{2}(%
\mathbb{Z}
_{n})$ cyclic. In other words, if $\varphi ^{3}(n)=\Phi ^{3}(n)$, then $%
U^{2}(%
\mathbb{Z}
_{n})$ is cyclic (see \cite{11} ).

\end{document}